\newtheorem{theorem}{Theorem}
\newtheorem{proposition}{Proposition}
\newtheorem{corollary}{Corollary}
\newtheorem{lemma}{Lemma}
\theoremstyle{remark}
\theoremstyle{definition}
\newtheorem{definition}{Definition}
\begin{document}

\title{When Euler (circle) meets  Poncelet (porism)}


\author[L. G. Gheorghe]{Liliana Gabriela Gheorghe}


\date{October, 2020}

\maketitle













\textbf{Abstract.}

\small{We describe all  triangles that shares the same circumcircle and Euler circle. 
Although this two circles do not form a poristic pair of circles,  we find a poristic circle "in-between"
that enable  to solve this problem using Poncelet porism.
 }
\bigskip

\bigskip

\bigskip

\section{Introduction}

\bigskip

Which triangles  share the same circumcircle and Euler circle? When is a pair of circles the circumcircle and Euler circle of some triangle?
The purpose of this paper is to give a poristic answer to these questions and to 
provide a functorial recipe
to construct them.

In a recent paper, who in fact motivated ours,   P. Pamfilos gave a full answer to these questions in the acute case (see [7] and the references therein); still 
the obtuse angle case was left open.
Other results and related problems were studied by  J. Weaver  in 
[10] and more recently by V. Oxman, in [6].
This problem resemble the first-ever poristic pair of triangles: the i-circle and circumcircle. There are indeed many similarities and our solution consistently use Euler-Chapler formula.

Here, we give a solution that use the inverses of Euler circle, w.r. to circumcircle, as the
"missing poristic hoop" between  circumcircle and the Euler circle. This enabled a functorial construction of  all  triangles that shares the same Euler circle and circumcircle: they will be   the in-touch triangles  associated
with this poristic pair.

Except for a few lines of algebraic computations, that use the versatility of  complex numbers and complex inversion to light the burden, our  solution is geometric and  concise.
Nevertheless, a careful reader would realize that this brevity strongly relies on  Poncelet porism and on the related  Euler-Chapler  formulas.

\section{A poristic tern}

   \begin{definition}
    
 We say that $(\mathcal{C}_1,\mathcal{C}_2)$ form a pair of poristic circles (for $n=3$) if there exists a some triangle $\triangle{A_1B_1C_1},$
whose vertices are on $\mathcal{C}_1$ and whose sides (or their lines) tangents circle $\mathcal{C}_2.$
We shall call 
$\mathcal{C}_1$ the inner circle and $\mathcal{C}_2$ the outer circle.

  If $A_2,B_2,C_2$  are the tangency points of the sides of $\triangle{A_1B_1C_1}$
with circle $\mathcal{C}_2$,
we say that the triangles $\triangle{A_1B_1C_1}$ and $\triangle{A_2B_2C_2}$
are a compatible pair of poristic triangles w.r. to this couple,
and call  $\triangle{A_1B_1C_1}$ is the circumscribed (or outer) triangle and 
$\triangle{A_2B_2C_2}$ is the in-touch (or inner) triangle. 
\label{definition:poristic-circles}
\end{definition}

--------------------------------------

\textbf{Keywords and phrases: } Euler circle, Euler-Chapler formula, Poncelet porism, negative pedal curve, dual curve, inversion. 

\textbf{(2020)Mathematics Subject Classification: }51M04, 51M15, 51A05.

These circles are either circumcircle and inscribed circle, or  circumcircle and some exinscribed circle; 
these  circles may   be secant, as well.

Note that this is an  ordered  pair;
we shall call the first circle, 
the "outer" and the second circle, the "inner."

Poncelet porism 
still works when the two circles
(or conics) intercepts; see [8]. Therefore,  
for any choice
of an initial point $A_1$ on the 
outer circle $\mathcal{C}_1,$ 
either  no  tangents  $t_b$ and $t_c$ from $A_1$ to $\mathcal{C}_2$  intercept again 
$\mathcal{C}_1$ or, the two  intercept 
 $\mathcal{C}_1$ in $B_1,C_1,$ and then
the line $B_1C_1$ tangents $\mathcal{C}_2,$ as well. 

Unfortunately,
the circumcircle $\mathcal{C}$ and Euler circle
$\mathcal{E}$ of a triangle do not match poristically; nevertheless, there is 
poristic hoop that join  them: 
$\mathcal{E}'$, the inverses of Euler circle w.r. to circumcircle.

\begin{figure}[H]
    \centering
    \includegraphics[trim=400 20 400 20,clip,width=.7\textwidth]{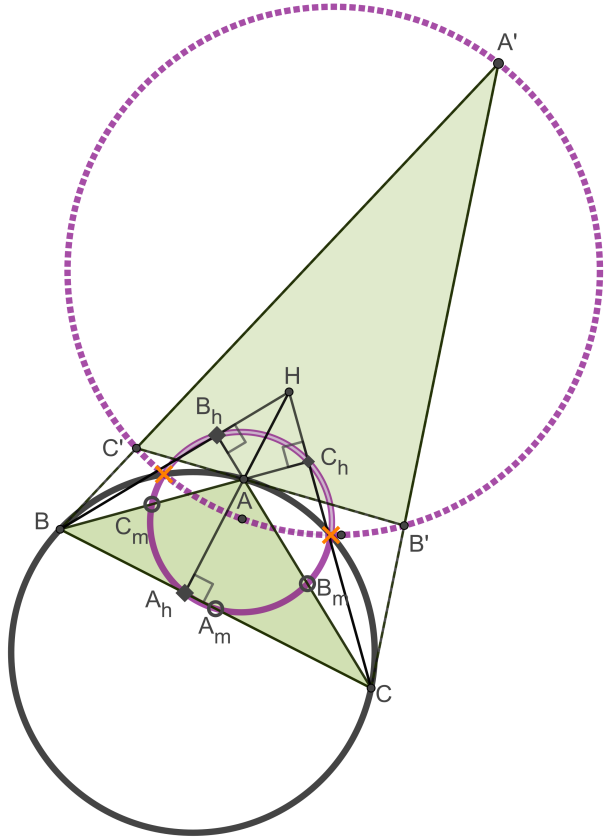}
    \caption{The Euler circle $\mathcal{E}$ (bordeaux) is the circumcircle of the midpoints $A_m,B_m,C_m$ of the sides of  $\triangle ABC.$
    The inverses of the midpoints w.r. to circumcircle, 
    are the poles of its sides and 
    located on the inverse of Euler circle $\mathcal{E}$ (bordeaux, dotted). 
    The sides of $\triangle{A'B'C'}$  tangents
     in $A,B,C$
    the circumcircle of the triangle 
     $\mathcal{C}$. 
     }
    \label{fig:0801}
\end{figure}

Refer to Fig.\ref{fig:0801}.

\begin{lemma} Let $\triangle {ABC}$ and let $\mathcal{C}$ its circumcircle.

i) The tangents 
in $A,B,C$ at $\mathcal{C}$ determine $\triangle{A'B'C'}.$ 
Then  $A',B',C'$ are the precisely the inverses of the midpoints $A_m,B_m,C_m$
 of the sides of the triangle w.r. to $\mathcal{C}.$

ii) Reciprocally, if $A',B',C'$ are the inverses
of the midpoints $A_m,B_m,C_m$, of the sides $BC,$
$CA$, $AB$ of $\triangle{ABC}$
then the lines $A'B'$,
$A'C'$, $B'C'$ tangents the circle in $C,B,A,$ respectively.

\label{lemma:polar-polo-simetrico}
\end{lemma}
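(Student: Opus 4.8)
The plan is to place the circumcircle $\mathcal{C}$ as the unit circle of $\mathbb{C}$, writing $A=a$, $B=b$, $C=c$ with $|a|=|b|=|c|=1$, so that inversion with respect to $\mathcal{C}$ becomes the map $z\mapsto 1/\bar z$. Both halves of the lemma then collapse into a single short computation, simply read in opposite directions; the synthetic content behind it is pole--polar reciprocity, and I will indicate at the end how to run the argument without coordinates.

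First I would record the two elementary facts that do all the work. The tangent to the unit circle at a point $u$ with $|u|=1$ has equation $\bar u\,z+u\,\bar z=2$; intersecting the tangents at $b$ and at $c$ and eliminating $\bar z$ gives the single point
\[
  A' \;=\; \frac{2bc}{\,b+c\,}.
\]
On the other hand the midpoint of $BC$ is $A_m=(b+c)/2$, and since $\bar b=1/b$ and $\bar c=1/c$, its inverse with respect to $\mathcal{C}$ is $1/\overline{A_m}=2/(\bar b+\bar c)=2bc/(b+c)$. Hence the meet of the tangents at $B$ and $C$ is exactly the inverse of $A_m$; the same identity holds cyclically, the meet of the tangents at $A,C$ being the inverse of $B_m$ and the meet of the tangents at $A,B$ the inverse of $C_m$.

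For (i): the triangle cut out by the tangents at $A$, $B$, $C$ has, in the labelling of Figure~\ref{fig:0801}, vertex $A'$ equal to the intersection of the tangents at $B$ and $C$ (and cyclically for $B'$, $C'$), so the displayed formula says at once that $A',B',C'$ are the inverses of $A_m,B_m,C_m$. For (ii): conversely, starting from $A'=2bc/(b+c)$ and its mates, a one-line substitution shows that $A'$ satisfies both $\bar b z+b\bar z=2$ and $\bar c z+c\bar z=2$, that $B'$ likewise lies on the tangent at $C$, and so on; therefore the line $A'B'$ is the common tangent at $C$, while $A'C'$ is the tangent at $B$ and $B'C'$ the tangent at $A$. (Coordinate-free version: $OA_m\perp BC$ already identifies the inverse of $A_m$ with the pole of the chord $BC$, i.e. with the intersection of the tangents at $B$ and $C$, and La Hire's theorem turns this one statement into both directions.)

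I do not expect a genuine obstacle: the lemma is essentially the bookkeeping of which pair of tangents meets at which vertex, and the only things to watch are the consistency of the vertex labels with Figure~\ref{fig:0801} and the degenerate configuration in which one side, say $BC$, is a diameter of $\mathcal{C}$ (the right angle at $A$). In that case the tangents at $B$ and $C$ are parallel, $A_m$ coincides with $O$, and both $A'$ and the inverse of $A_m$ are the point at infinity, so the statement still holds in the projective closure that is in any case needed for Poncelet's porism.
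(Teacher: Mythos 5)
Your argument is correct, but it takes a genuinely different route from the paper's. The paper proves (i) synthetically: it observes that $\triangle{A'BO}$ and $\triangle{A'CO}$ are congruent right triangles, that the median $OA_m$ of the isosceles $\triangle{BOC}$ is also an altitude, and then reads off the geometric-mean relation $OA_m\cdot OA'=OB^2$ together with the collinearity of $O,A_m,A'$, which is exactly the definition of inversion; part (ii) is the same picture run backwards ($A'B\perp OB$ and $C'B\perp OB$ force $A',B,C'$ collinear on the tangent at $B$). You instead put $\mathcal{C}$ as the unit circle and verify the single identity $2bc/(b+c)=1/\overline{A_m}$, from which both directions follow by substitution into the tangent equation $\bar u z+u\bar z=2$. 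Your computation is right (and your closing remark that $OA_m\perp BC$ identifies the inverse of $A_m$ with the pole of $BC$ is essentially the paper's synthetic proof compressed into one sentence via La Hire). What the coordinate version buys is uniformity: one formula handles all three vertices and both implications at once, and you explicitly flag the degenerate case $b+c=0$ (a right angle at $A$, tangents at $B,C$ parallel), which the paper's Lemma silently excludes and only treats later in its separate right-angle subsection. What the synthetic version buys is that it exhibits the relation $OA_m\cdot OA'=OB^2$ that the paper reuses verbatim inside the proof of its Theorem on the poristic pair, so the two proofs are not interchangeable downstream. One small point of care in your part (ii): having shown that $A'$ and $B'$ both lie on the tangent at $C$, you should note that $A'\neq B'$ (which holds since $2bc/(b+c)=2ac/(a+c)$ forces $a=b$) before concluding that the \emph{line} $A'B'$ is that tangent.
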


\begin{proof} i) By construction, 
 $\triangle{A'BO}$ and $\triangle{A'CO}$ are two congruent right-angled triangles.
 Since $\triangle{BOC}$ is isosceles, its median $OA_m$ is also an altitude.
 Thus, 
$BA_m$  is  altitude in $\triangle{A'BO}$, hence $OA_m\cdot OA'=OB^2,$ and $O,A_m,A'$ are collinear, proving that
$A'$ is the inverses of $A_m$ w.r. to circle $\mathcal{C}$.

ii) If $A'$ is the inverse of $A_m,$ the midpoint of $BC,$ than, by the construction of the inverses, 
$A'B\perp OB;$  
similarly, $C'$ is the inverse of $C_m,$ the midpoint of $BC,$ than $C'B\perp OB;$
hence $A',B,C'$ are collinear and $A'C'$ tangents the circle in $B.$ Similarly  $B'C'$,
and $A'B'$ tangents the circle in $A$ and $C,$ respectively.
\end{proof}

By construction, $A',B',C'$ are the poles of the sides of $\triangle{ABC};$ therefore, Lemma ~\ref{lemma:polar-polo-simetrico} reads as follows:

\begin{corollary}
The circumcircle of $\triangle{A'B'C'}$
 is the inverse of Euler circle $\mathcal{E}$ of $\triangle{ABC}$ w.r. to $\mathcal{C}.$
 \label{cor:corolario1}
\end{corollary}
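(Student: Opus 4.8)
The plan is to read off the corollary from Lemma~\ref{lemma:polar-polo-simetrico} together with the elementary fact recorded in the caption of Figure~\ref{fig:0801}: the Euler circle $\mathcal{E}$ of $\triangle ABC$ is the circumcircle of the three side-midpoints $A_m,B_m,C_m$. The only extra ingredient is the standard behaviour of inversion on circles, so the whole argument is essentially a one-line deduction once the pieces are lined up.

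First I would invoke the basic property of the inversion $\iota$ with pole $O$ (the circumcenter) and circle $\mathcal{C}$: $\iota$ maps any circle not through $O$ bijectively onto a circle, and the image circle is completely determined by the images of any three of its points. Since, by the classical Euler relation $OH^2=R^2-8R^2\cos A\cos B\cos C$, the distance from $O$ to the nine-point center equals $R/2$ exactly when $\triangle ABC$ is right-angled, for a non-right triangle $O\notin\mathcal{E}$ and hence $\iota(\mathcal{E})$ is a genuine circle $\mathcal{E}'$; equivalently, in that case the tangents to $\mathcal{C}$ at $A,B,C$ meet pairwise, so $A',B',C'$ are finite and non-collinear. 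The right-angled case is the degenerate one, in which one vertex of $\triangle A'B'C'$ escapes to infinity and $\mathcal{E}'$ degenerates to a line; one handles it by passing to the inversive plane, or simply excludes it.

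Now apply Lemma~\ref{lemma:polar-polo-simetrico}(i): $A'=\iota(A_m)$, $B'=\iota(B_m)$, $C'=\iota(C_m)$. Because $A_m,B_m,C_m$ lie on $\mathcal{E}$, their images $A',B',C'$ lie on $\iota(\mathcal{E})=\mathcal{E}'$. But $A',B',C'$ are three non-collinear points, and the unique circle through them is, by definition, the circumcircle of $\triangle A'B'C'$; therefore this circumcircle equals $\mathcal{E}'$, the inverse of the Euler circle with respect to $\mathcal{C}$. That is exactly the assertion of the corollary. There is no real obstacle here — the argument is a direct corollary of the Lemma, as the text already advertises — and the only point deserving a word of care is the bookkeeping of the right-angled degenerate case noted above.
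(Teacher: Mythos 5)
Your argument is correct and is essentially the paper's own (implicit) proof: the author deduces the corollary directly from Lemma~\ref{lemma:polar-polo-simetrico}(i) by noting that $A',B',C'$ are the inverses of the concyclic midpoints $A_m,B_m,C_m\in\mathcal{E}$, so their circumcircle is $\iota(\mathcal{E})$. Your explicit handling of the right-angled degenerate case, where $\mathcal{E}'$ becomes a line, is a small but welcome addition that the paper only acknowledges later in its discussion of the tangent case.
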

Lemma \ref{lemma:polar-polo-simetrico} and the simple fact that inversion in circle is an involution led to  the following.

 \begin{theorem} Let $\triangle {ABC}$
and let  $\mathcal{C}$ its circumcircle. The tangents 
in $A,B,C$ at $\mathcal{C}$ determine  $\triangle{A'B'C'};$ let $\mathcal{E}'$ be its circumcircle.
Then:
\begin{enumerate}
\item
$(\mathcal{E}', \mathcal C)$ is a poristic pair (for $n=3$);
\item
$\triangle{A_1B_1C_1}$ have circumcircle $\mathcal{C}$ and Euler circle $\mathcal{E}$ if and only if it is an in-touch triangle  w.r. to  
$(\mathcal{E}', \mathcal C)$.
\end{enumerate}
\label{proposition:simetric-point-pole}
\end{theorem}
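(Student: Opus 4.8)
The plan is to deduce both assertions from Corollary~\ref{cor:corolario1} (the circumcircle of the tangential triangle is the inverse of the Euler circle in $\mathcal{C}$) together with the single additional fact that inversion in $\mathcal{C}$ is an involution; Poncelet's porism then enters only to promote one witnessing triangle to the full one-parameter family. For~(1) it suffices to exhibit a witness: by construction $\triangle A'B'C'$ has its vertices on $\mathcal{E}'$ and, by Lemma~\ref{lemma:polar-polo-simetrico}, its three sides are the tangents to $\mathcal{C}$ at $A,B,C$; hence $\triangle A'B'C'$ is a circumscribed poristic triangle for $(\mathcal{E}',\mathcal{C})$ in the sense of Definition~\ref{definition:poristic-circles}, with $\triangle ABC$ as its in-touch triangle. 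So $(\mathcal{E}',\mathcal{C})$ is a poristic pair, and Poncelet's porism guarantees that the construction started at any point of $\mathcal{E}'$ closes up after three steps.

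For the ``only if'' direction of~(2) I would argue directly. Given $\triangle A_1B_1C_1$ with circumcircle $\mathcal{C}$ and Euler circle $\mathcal{E}$, apply Lemma~\ref{lemma:polar-polo-simetrico} and Corollary~\ref{cor:corolario1} to $\triangle A_1B_1C_1$ itself: the tangents to $\mathcal{C}$ at $A_1,B_1,C_1$ bound a triangle whose circumcircle is the $\mathcal{C}$-inverse of the Euler circle of $\triangle A_1B_1C_1$, i.e. the $\mathcal{C}$-inverse of $\mathcal{E}$, which is $\mathcal{E}'$. This triangle is therefore a circumscribed poristic triangle for $(\mathcal{E}',\mathcal{C})$ whose contact points with $\mathcal{C}$ are precisely $A_1,B_1,C_1$, so $\triangle A_1B_1C_1$ is exactly the in-touch triangle it determines.

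For the ``if'' direction, let $\triangle A_1B_1C_1$ be an in-touch triangle of $(\mathcal{E}',\mathcal{C})$, arising from a circumscribed triangle $\triangle P_1P_2P_3$ with $P_1,P_2,P_3\in\mathcal{E}'$ and sides tangent to $\mathcal{C}$ at $A_1,B_1,C_1$. Then $\mathcal{C}$ is the circumcircle of $\triangle A_1B_1C_1$, and since the sides of $\triangle P_1P_2P_3$ are exactly the tangents to $\mathcal{C}$ at $A_1,B_1,C_1$, the triangle $\triangle P_1P_2P_3$ is the tangential triangle of $\triangle A_1B_1C_1$. By Corollary~\ref{cor:corolario1} its circumcircle, which is $\mathcal{E}'$, is the $\mathcal{C}$-inverse of the Euler circle of $\triangle A_1B_1C_1$; inverting once more in $\mathcal{C}$ and using involutivity, that Euler circle is the $\mathcal{C}$-inverse of $\mathcal{E}'$, namely $\mathcal{E}$.

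The delicate point I expect is the identification, in the last paragraph, of $\triangle P_1P_2P_3$ with the tangential triangle of $\triangle A_1B_1C_1$: two tangents to $\mathcal{C}$ fail to intersect precisely when their contact points are antipodal, that is, when $\triangle A_1B_1C_1$ is right-angled --- which is also exactly the case in which $\mathcal{E}$ passes through the center of $\mathcal{C}$ and $\mathcal{E}'$ collapses to a line. I would therefore carry out the argument for triangles that are not right-angled, treating the right-angled configuration as a degenerate limit (or excluding it at the outset, since it is implicit in the very formation of $\mathcal{E}'$). Apart from this degeneracy, the whole statement is a two-line consequence of Corollary~\ref{cor:corolario1} and the involutivity of inversion in $\mathcal{C}$.
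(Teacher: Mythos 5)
Your proof is correct and follows essentially the same route as the paper: establish the pair via the explicit witness $\triangle A'B'C'$, get the forward implication from Lemma~\ref{lemma:polar-polo-simetrico}, and get the converse from the inverse relation between the Euler circle of the in-touch triangle and the circumcircle $\mathcal{E}'$ of the circumscribed one, closed up by the involutivity of inversion. The only organisational difference is that in the converse you simply cite Corollary~\ref{cor:corolario1} applied to $\triangle A_1B_1C_1$, whereas the paper re-derives the relation $OB_1^2=OA_{1m}\cdot OA_1'$ from the isosceles tangent configuration (i.e.\ it inlines the proof of Lemma~\ref{lemma:polar-polo-simetrico}(i)); your explicit flagging of the right-angled degeneracy, where $\mathcal{E}'$ collapses to a line, is handled by the paper only in a separate subsection and is a worthwhile precaution.
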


\begin{proof} 
i) is a direct consequence of the construction and of Poncelet porism.

ii) 
$\Rightarrow$ is also a direct consequence of the construction and of Lemma \ref{lemma:polar-polo-simetrico}.

$\Leftarrow$ 
We need to prove that if some triangle $\triangle{A_1B_1C_1}$ is in-touch w.r. to $(\mathcal{E}', \mathcal C)$, then its circumcircle is $\mathcal{C}$ and its Euler circle is $\mathcal{E}$.

Refer to figure~\ref{fig:0801} and ~\ref{fig:0821}; to avoid desnecessary repetition of this figure,   points $A_1,B_1,C_1$ reads $A,B,C$ and so on.

By hypothesis,  $\triangle{A_1B_1C_1}$ is in-touch w.r. to $(\mathcal{E}', \mathcal C)$;
so there exists a point  $A_1'\in \mathcal{E}'$ such that if   $A_1'B_1',$
$A_1'C_1'$ the two tangents from $A_1'$ to circle $\mathcal{C};$ then, by Poncelet porism, $B_1'C_1'$ also tangents $\mathcal{C};$ and tangency point of
 these tangents with circle 
$\mathcal{E}'$ determines the in-touch triangle.

We only have to prove that Euler triangle of $\triangle{A_1B_1C_1}$ is $\mathcal{E}$.
By construction, $A_1'B_1$ and $A_1'C_1$ are two tangents to circle $\mathcal{C};$ therefore $\triangle{A_1'B_1C_1}$ is isosceles and if we denote with $A_{1m}$ the midpoint of segment $[B_1C_1],$ then  $A'_1A_{1m}$ is perpendicular on $B_1C_1;$
on the other side, $OA_{1m}$ is perpendicular on the cord
$B_1C_1,$ since
$B_1,C_1$ are on the circle $\mathcal{C}$ and 
$O$ is its center. Therefore, $A_1',A_{1m}$ and $O$ are collinear.
Hence, in $\triangle {OB_1A_1'}$ which is right-angled at $B_1,$ as line $A_1'B_1$ is a tangent to $\mathcal{C}$, 
$$OB_1^2=OA_{1m}\cdot OA_1'$$ which means that point $A_{1m}$
and $A_1'$ are inverses w.r. to circle $\mathcal{C}.$
Similarly, $B_{1m}$ and $B_1'$, and $C_{1m}$ and $C_1'$.
Thus, the midpoints of the intouch triangle $\triangle{A_1B_1C_1}$ are the inverses of $A_1',B_1',C_1',$
w.r. to $\mathcal{C}$.
These midpoints determine the Euler circle, say $\mathcal{E_1}$ of the in-touch triangle $\triangle{A_1B_1C_1};$ but since they proved to be the inverses of three points that belong to  $\mathcal{E}'$ they are on the inverse of $\mathcal{E}'$ w.r. to $\mathcal{C},$ hence on $\mathcal{E}.$

\end{proof}

\begin{figure}[H]
    \centering
    \includegraphics[trim=300 40 300 0,clip,width=.7\textwidth]{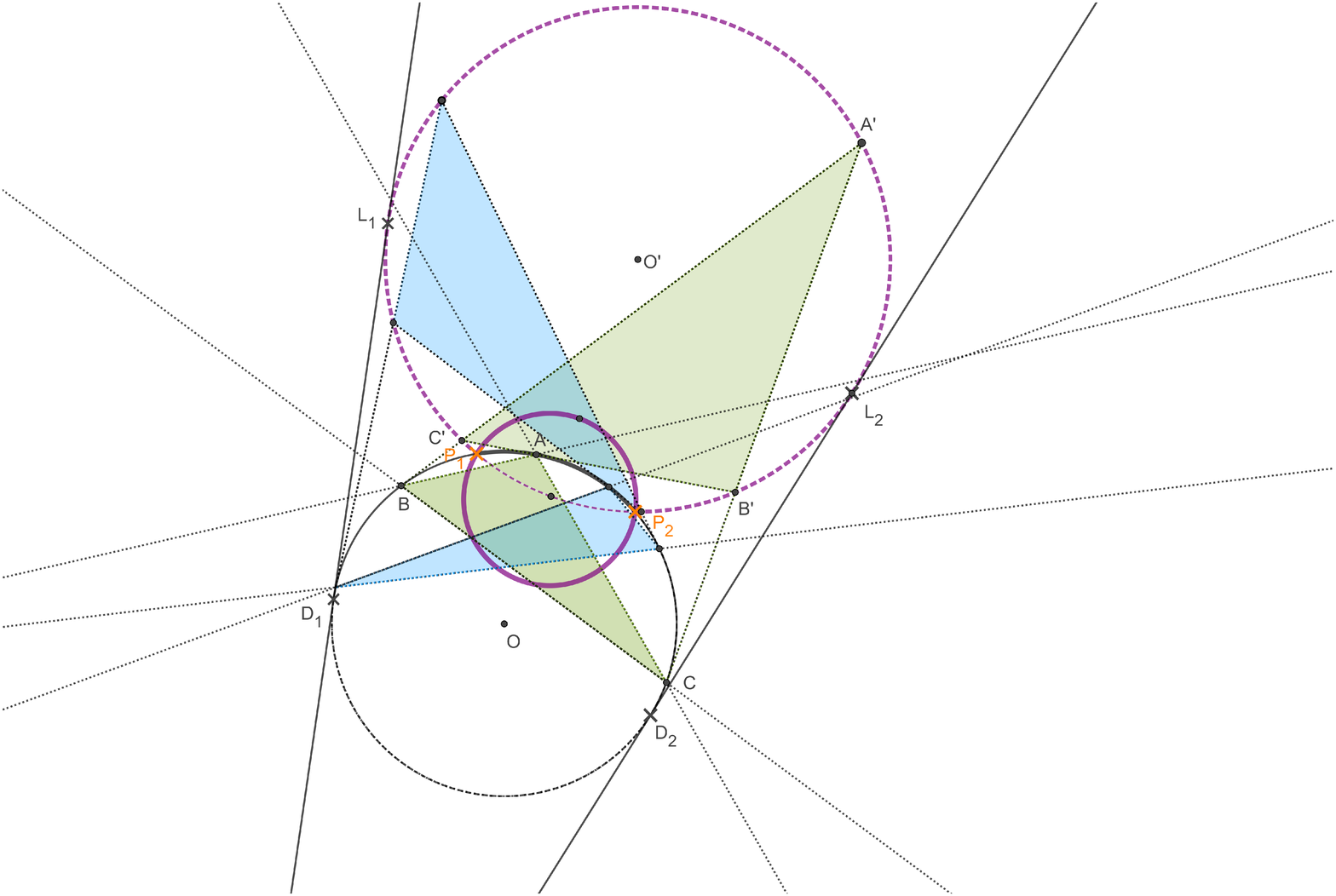}
    \caption{
    If $\triangle{ABC}$ is obtuse, then its Euler circle (bordeaux) intercept its circumcircle in $P_1,P_2;$ then
    the obtuse vertice lies necessarily on arc $P_1P_2$
    and one side $B'C'$ of its outer triangle tangents the circumcircle precisely at $A;$ if $L_1,L_2$ and $D_1,D_2$ are the interception of the common tangents, at the two circles, then 
    $B',C'$ lie,  on arcs $P_1L_1$ and $P_2L_2$, respectively. $A'$ lie on arc $L_1L_2.$  
    }
    \label{fig:0821}
\end{figure}

\subsection{The obtuse case.}
Let us  take a closer look to the obtuse triangles.
In this case, some arcs of circumcircle $\mathcal{C},$ or on 
$\mathcal{E}',$
are either "unreachable"
or "infertile:" either they cannot be reached by any tangent line or there is not possible to draw 
tangents to $\mathcal{C}$ form that points of   of $\mathcal{E}'$.

Referring to Fig~\ref{fig:0821}
\begin{lemma}
    The Euler circle $\mathcal{E}$ and  the  circumcircle $\mathcal{C}$
    of a triangle are secant if and only if the triangle is obtuse.
    
    If this is the case, let the  obtuse angle be in $B$ and let
    $P_1$, $P_2$  the intersection points of Euler circle and circumcircle.
    Then   $B$ locates inside the Euler circle, on the arc of $\mathcal{C}$ 
    delimited by $P_1$ and $P_2$.
 \label{lemma:euler-obtuse}   
\end{lemma}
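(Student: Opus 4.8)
The plan is to work with the Euler–Chapple (Euler–Chapler, in the paper's spelling) relation and its analogue for the Euler circle. Recall that if $R$ is the circumradius, $r$ the inradius and $d = OI$ the distance between circumcenter and incenter, then $d^2 = R^2 - 2Rr$; the Euler circle has radius $R/2$ and its center $N$ (the nine-point center) satisfies $ON = OH/2$ where $H$ is the orthocenter, and $OH^2 = R^2 - 8R^2\cos A\cos B\cos C$ (equivalently $OH^2 = 9R^2 - (a^2+b^2+c^2)$). So first I would write $ON^2 = \tfrac14 OH^2 = \tfrac14\big(R^2 - 8R^2\cos A\cos B\cos C\big)$. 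Two circles of radii $R$ and $R/2$ with centers at distance $\delta := ON$ are secant precisely when $R - R/2 < \delta < R + R/2$, i.e. when $R^2/4 < \delta^2 < 9R^2/4$. Plugging in, secancy is equivalent to $R^2/4 < \tfrac14 R^2(1 - 8\cos A\cos B\cos C) < 9R^2/4$, that is to $0 < -8\cos A\cos B\cos C < 8$, i.e. $-1 < \cos A\cos B\cos C < 0$. The right inequality $\cos A\cos B\cos C < 0$ holds exactly when an odd number of the cosines is negative, i.e. exactly one angle is obtuse (all three cannot be), which is precisely the condition that the triangle is obtuse; and when the triangle is obtuse one checks the left inequality $\cos A\cos B\cos C > -1$ holds automatically. (When the triangle is right, one cosine vanishes, $ON = R/2$, and the circles are internally tangent, not secant; when acute, $\cos A\cos B\cos C > 0$ gives $ON < R/2$ and $\mathcal{E}$ lies strictly inside $\mathcal{C}$.) This proves the first assertion.

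For the second assertion, suppose the obtuse angle is at $B$, so $\cos B < 0$ while $\cos A, \cos C > 0$. I need to locate $B$ relative to $\mathcal{E}$ and relative to the arc $P_1P_2$. First, that $B$ lies inside the Euler circle: the Euler circle passes through the foot $H_b$ of the altitude from $B$ and through the midpoint $M_b$ of the segment joining $B$ to the orthocenter $H$; when $B$ is obtuse, $H$ lies on the opposite side of line $AC$ from $B$ (indeed outside the triangle), so the segment $BH$ crosses line $AC$, and $B$ lies on the Euler circle's diameter $H_bM_b$ between... — more cleanly, I would compute $NB$: since $N$ is the midpoint of $OH$, $\vec{NB} = \vec{B} - \tfrac12(\vec O + \vec H)$; taking $O$ as origin and using $\vec H = \vec A + \vec B + \vec C$ gives $NB = \tfrac12|\vec B - \vec A - \vec C| = \tfrac12|\vec{AB}... |$. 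Actually $\vec B - (\vec A + \vec B + \vec C)/... $ — with $\vec N = \vec H /2 = (\vec A+\vec B+\vec C)/2$, we get $\vec B - \vec N = (\vec B - \vec A - \vec C)/2$, so $NB^2 = \tfrac14|\vec B - \vec A - \vec C|^2 = \tfrac14(3R^2 - 2\vec B\cdot\vec A - 2\vec B \cdot \vec C + 2\vec A\cdot\vec C)$, and using $\vec X\cdot \vec Y = R^2\cos(2\gamma)$ type identities one reduces this to an expression whose sign relative to $(R/2)^2$ is governed by $\cos B$. I expect $NB < R/2 \iff \cos B < 0$, giving "$B$ inside $\mathcal E$."

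Next, that $B$ lies on the arc $P_1P_2$: the two intersection points $P_1, P_2$ of $\mathcal{C}$ and $\mathcal{E}$ divide $\mathcal{C}$ into two arcs, and a point of $\mathcal{C}$ lies "on the $\mathcal E$-side" (inside $\mathcal E$) on exactly one of them, since $\mathcal E \cap \mathcal C = \{P_1,P_2\}$ and the function "signed distance$^2$ to $N$ minus $(R/2)^2$", restricted to $\mathcal C$, changes sign exactly at $P_1, P_2$. Combined with "$B$ inside $\mathcal{E}$" from the previous step, this forces $B$ onto that arc. The last thing to pin down is the direct claim that $B$ is the unique vertex inside $\mathcal E$ — which again follows since for the acute vertices $A, C$ the same computation gives $NA > R/2$, $NC > R/2$.

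The main obstacle I anticipate is purely bookkeeping: carrying out the trigonometric simplification of $NB^2$ and of $ON^2$ cleanly, and getting the boundary cases (right triangle, and the degenerate near-degenerate obtuse triangles where $\cos A\cos B\cos C \to -1$) exactly right, so that "secant" means genuinely two transversal intersection points and not tangency. An alternative, more synthetic route for the "obtuse $\Rightarrow$ secant" direction, avoiding trigonometry, is to use Corollary~\ref{cor:corolario1} and Theorem~\ref{proposition:simetric-point-pole}: $\mathcal E$ is secant to $\mathcal C$ iff its inverse $\mathcal{E}'$ meets $\mathcal C$, and $\mathcal E'$ is the circumcircle of the tangential triangle $\triangle A'B'C'$, which is unbounded / has a vertex "at infinity" precisely when two of the tangent lines at $A,B,C$ are parallel — that does not happen for a generic obtuse triangle, so instead one argues via which side of $\mathcal C$ the vertices $A',B',C'$ fall on. I would keep the trigonometric proof as the primary one for its transparency, and mention the inversion viewpoint as a remark.
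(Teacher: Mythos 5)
Your proposal is correct in substance but follows a genuinely different route from the paper. The paper's proof of Lemma~\ref{lemma:euler-obtuse} is purely synthetic: when $\angle B$ is obtuse, the foot $A_h$ of the altitude from $A$ falls on line $BC$ outside the segment $[BC]$, beyond $B$, so $B$ lies \emph{between} the two nine-point-circle points $A_h$ and $A_m$; being an interior point of a chord of $\mathcal{E}$, the vertex $B$ is inside $\mathcal{E}$, and since it also lies on $\mathcal{C}$ the two circles must cross, with $B$ on the arc $P_1P_2$. Your route instead quantifies everything through $ON=\frac{1}{2}OH$ and the identity $OH^2=R^2(1-8\cos A\cos B\cos C)$, reducing ``secant'' to $-1<\cos A\cos B\cos C<0$, and locates $B$ via $NB$ versus $R/2$. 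Both work. What your version buys is a genuine two-way equivalence in one computation (the paper's proof really only argues obtuse $\Rightarrow$ secant directly, leaving the converse to the trichotomy of Theorem~\ref{thm:necessary-Euler-chapler}), plus a uniform treatment of the tangent and acute cases; what it costs is reliance on the $OH$ formula and a trigonometric reduction you left as ``I expect.'' That step does close: with $O$ at the origin and $\vec N=\frac{1}{2}(\vec A+\vec B+\vec C)$ one gets
\begin{equation*}
NB^2=\frac{R^2}{4}\bigl(3-2\cos 2A+2\cos 2B-2\cos 2C\bigr)=\frac{R^2}{4}+2R^2\sin A\,\sin C\,\cos B,
\end{equation*}
so $NB<R/2$ if and only if $\cos B<0$, exactly as you predicted, and the same identity shows the acute vertices lie outside $\mathcal{E}$. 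Your final arc argument (a point of $\mathcal{C}$ interior to $\mathcal{E}$ must lie on the arc of $\mathcal{C}$ cut off inside $\mathcal{E}$ by $P_1,P_2$) matches the paper's closing sentence. The only caveat is that your alternative ``inversion viewpoint'' remark at the end is not needed and, as sketched, is not a proof; keep the trigonometric argument as the primary one.
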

\begin{proof} Refer to figures ~\ref{fig:0801} and  ~\ref{fig:0821}.
Let $A_h$ and $C_h$ the foots of the altitudes
from $A$ and $C$ to the sides of triangle. 
Since $\angle{B}$ is obtuse, these two points $A_h$ and $C_h$ 
locates outside the sides $[BC]$ and $[AC].$
Thus, $C_m C_h>AC_m$ and $A_m A_h>CA_m.$
Therefore, $B$ is located on  the segments $[A_m C_h]$ 
and on $[C_m A_h];$ since  
$A_m,A_h,B_m,B_h$ are on the Euler  circle, by convexity,
these segments thus, point $B$ itself is inside the Euler circle.

Since $B$ is on circle $\mathcal{C},$ it therefore located  on  the arc $P_1P_2$ of $\mathcal{C},$ 
situated inside the Euler circle $\mathcal{E}.$
\end{proof}

\begin{corollary}
With notations as in Lemma \ref{lemma:euler-obtuse}, the triangles sharing the same circumcircle and Euler circle with an obtuse $\triangle{ABC}$ have necessarily one vertex  on arc $P_1P_2.$
\end{corollary}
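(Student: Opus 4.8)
The plan is to deduce this directly from Lemma~\ref{lemma:euler-obtuse}, which already characterizes obtuse triangles by the secancy of their circumcircle and Euler circle and pins down where the obtuse vertex sits; the only thing to add is that every triangle in the family we are describing is forced to be obtuse.

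First I would observe that any triangle $\triangle{A_1B_1C_1}$ sharing the circumcircle $\mathcal{C}$ and the Euler circle $\mathcal{E}$ of the obtuse $\triangle{ABC}$ is itself obtuse. Indeed, since $\triangle{ABC}$ is obtuse, Lemma~\ref{lemma:euler-obtuse} tells us that $\mathcal{C}$ and $\mathcal{E}$ are secant, meeting exactly at $P_1$ and $P_2$; applying the ``only if'' direction of the same lemma to $\triangle{A_1B_1C_1}$ --- whose circumcircle and Euler circle are precisely this secant pair --- forces $\triangle{A_1B_1C_1}$ to have an obtuse angle.

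Next, I would apply the second assertion of Lemma~\ref{lemma:euler-obtuse} to $\triangle{A_1B_1C_1}$ itself: its obtuse vertex lies inside the Euler circle, on the arc of the circumcircle cut off by the two intersection points of $\mathcal{E}$ and $\mathcal{C}$. Since those intersection points are $P_1$ and $P_2$, and the relevant arc is the one interior to $\mathcal{E}$, the obtuse vertex of $\triangle{A_1B_1C_1}$ lies on arc $P_1P_2$, which is exactly the claim.

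There is essentially no obstacle here: once one notes via the ``iff'' of Lemma~\ref{lemma:euler-obtuse} that every member of the poristic family is obtuse, the corollary is just that lemma applied to each such triangle. If one wanted a sharper statement, the interesting point would be to locate the other two vertices --- as in Figure~\ref{fig:0821}, on the arcs $P_1L_1$ and $P_2L_2$ determined by the intersections $L_1,L_2$ of the common tangent lines of $\mathcal{C}$ and $\mathcal{E}'$ --- but that lies beyond what the corollary asserts and would require a closer look at the Poncelet closure picture behind Theorem~\ref{proposition:simetric-point-pole}.
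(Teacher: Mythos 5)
Your proof is correct and is exactly the argument the paper intends: the corollary is stated without a separate proof precisely because it follows from applying both directions of Lemma~\ref{lemma:euler-obtuse} — first the equivalence (secant circles force any triangle in the family to be obtuse), then the location statement for the obtuse vertex. Your remark about pinning down the other two vertices via the common tangents matches what the paper defers to the discussion of ``fertile'' and ``sterile'' arcs.
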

In the obtuse case, the circles $(\mathcal{E}',\mathcal{C})$ form a porisic pair of secant circles. 
The intersection of tangents in $P_1$ and $P_2$ to the circumcircle $\mathcal{C}$ 
with $\mathcal{E}'$ give explicit  restriction on "fertile" or "sterile" arcs: the arcs of  $\mathcal{E}'$ 
that may or may not contain a vertex of some triangle,  that generate a triangle inscribed in $\mathcal{C}.$

\begin{figure}[H]
    \centering
    \includegraphics[trim=100 80 100 10,clip,width=.7\textwidth]{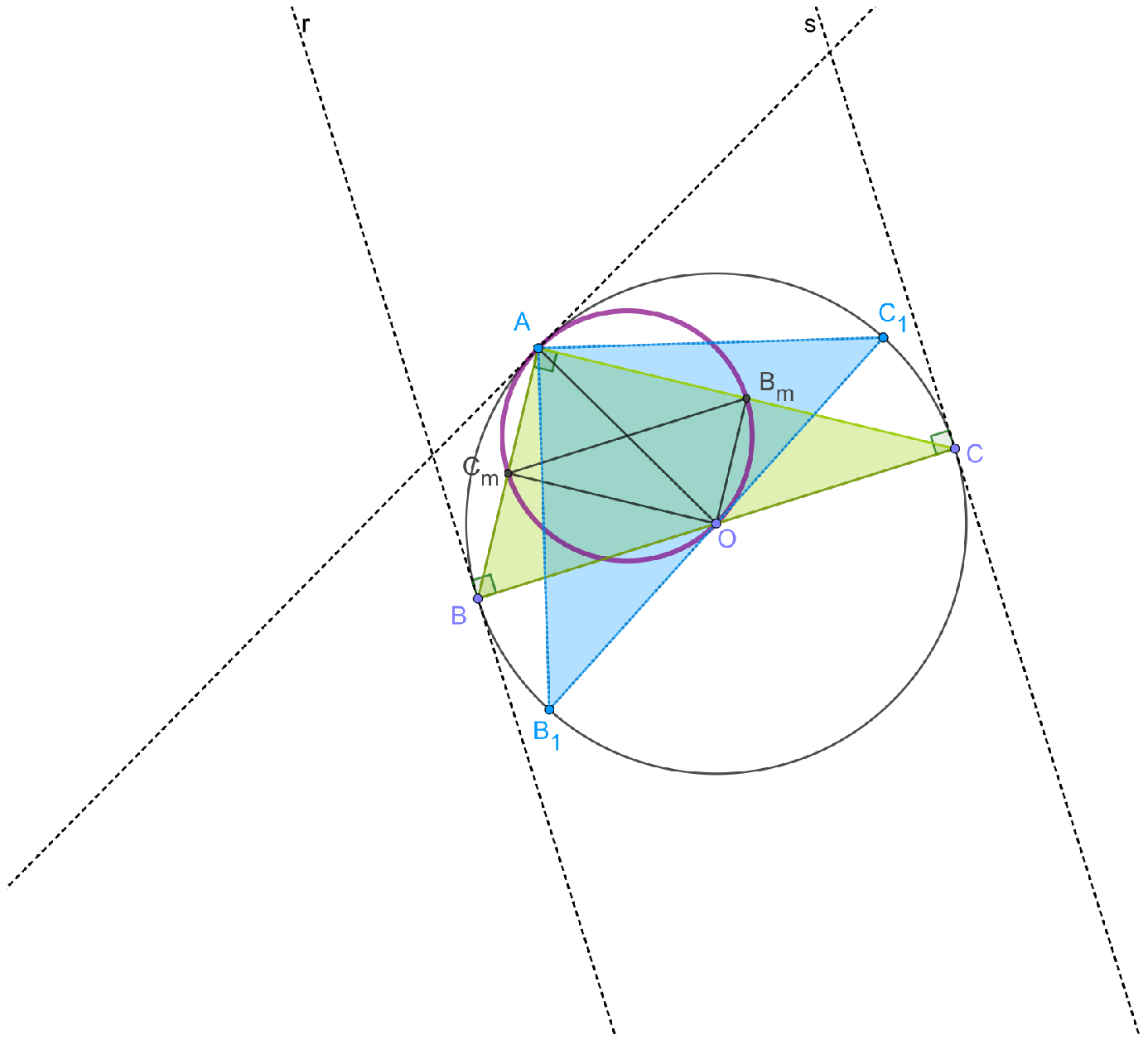}
    \caption{
    When the triangle is right-angled its Euler circle tangents its circumcircle
    }
    \label{fig:euler-retangulo}
\end{figure}

\subsection{The right-angle case.}

Euler circle tangents internally the circumcircle iff the triangle is right-angled; in this case, all triangles sharing the circumcircle and Euler circle have a common point in vertices in $A$, the tangency point; in this case, the polars of the vertices does not define any circle and the inverses of Euler circle w.r. to circumcircle is a line: the tangent in $A.$

\section{Pairs of compatible circumcircle-Euler circle}

Till now, our  pair of circles
 $(\mathcal{C},\mathcal{E})$, were respectively, the 
circumcircle and Euler circle of some
pre-existent triangle.

Euler-Chapler formulas give the necessary and sufficient conditions in order that two circles with known distance between their centers and known radius 
   be  either the circumcircle and the incircle 
or the circumcircle and  ex-inscribed circle of some triangle.
Here, we formulate  similar conditions for the (non poristic) pair circumcircle and Euler circle. 

First, some necessary conditions.
 
\begin{theorem} Let $\mathcal{C(O,R)}$ and $\mathcal{E}(N,r)$ the circumcircle and Euler circle of some triangle $\triangle{ABC}.$
Then $r=\frac{R}{2}.$ Further,

i) if  $\triangle{ABC}$ is acute, then $ON<\frac{R}{2}.$
 
     ii)  if $\triangle{ABC}$ is right-angled, then $ON=\frac{R}{2}.$
     
     iii) if $\triangle{ABC}$ is  obtuse then $ON>\frac{R}{2}.$
    \label{thm:necessary-Euler-chapler}
\end{theorem}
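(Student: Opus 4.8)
The plan is to reduce the whole statement to the position of the nine-point centre $N$ relative to $O$, via two classical facts: the Euler circle $\mathcal E$ is the image of the circumcircle $\mathcal C$ under the homothety of ratio $-\tfrac12$ centred at the centroid $G$, and $N$ is the midpoint of the segment $OH$ joining circumcentre and orthocentre. The first fact gives at once $r = R/2$ (and, as a by-product, that $\mathcal E$ passes through the three side-midpoints, which are interior points of the circumdisc). The second gives $ON = \tfrac12\,OH$, so that everything comes down to comparing $OH$ with $R$: we must show $OH<R$, $OH=R$, $OH>R$ according as $\triangle ABC$ is acute, right-angled, or obtuse.

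To that end I would compute $OH$ directly. Placing $O$ at the origin one has $\vec{OH}=\vec{OA}+\vec{OB}+\vec{OC}$, whence, using $\vec{OB}\cdot\vec{OC}=R^2\cos\angle BOC=R^2\cos 2A$ and its cyclic analogues,
\[
OH^2 = 3R^2 + 2R^2\bigl(\cos 2A+\cos 2B+\cos 2C\bigr).
\]
The elementary identity $\cos 2A+\cos 2B+\cos 2C=-1-4\cos A\cos B\cos C$, valid whenever $A+B+C=\pi$, then yields
\[
OH^2 = R^2\bigl(1-8\cos A\cos B\cos C\bigr),\qquad\text{hence}\qquad ON^2=\frac{R^2}{4}\bigl(1-8\cos A\cos B\cos C\bigr).
\]
Therefore $ON$ is less than, equal to, or greater than $\tfrac R2$ exactly according to the sign of $\cos A\cos B\cos C$. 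Since a triangle has at most one non-acute angle, this product is positive iff all three angles are acute, zero iff one angle is right, and negative iff one angle is obtuse; that is precisely (i)--(iii).

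There is also an almost computation-free alternative using what is already proved. Two circles of radii $R$ and $R/2$ (with $R/2<R$) lie in exactly one of the configurations ``$\mathcal E$ interior to $\mathcal C$'' ($ON<\tfrac R2$), ``internally tangent'' ($ON=\tfrac R2$), or ``secant'' ($\tfrac R2<ON<\tfrac{3R}{2}$); the remaining possibilities (external, externally tangent, or $\mathcal C$ interior to $\mathcal E$) are ruled out because $r<R$ and $\mathcal E$ meets the open circumdisc, as it passes through the side-midpoints. By Lemma~\ref{lemma:euler-obtuse} the secant case is equivalent to $\triangle ABC$ being obtuse, and by the right-angle discussion above the internally tangent case is equivalent to $\triangle ABC$ being right-angled; the interior case is then equivalent to $\triangle ABC$ being acute. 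Reading off the corresponding inequalities on $ON$ recovers (i)--(iii).

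The only genuine work is establishing $OH^2=R^2(1-8\cos A\cos B\cos C)$ (equivalently, the cosine identity); beyond that, the one point demanding care in either route is that the sign condition on the product of cosines is converted into the acute/right/obtuse trichotomy precisely because at most one angle of a triangle can be $\ge 90^\circ$, and — in the geometric route — that $\mathcal E$ is never exterior to $\mathcal C$, which follows from the side-midpoints lying strictly inside the circumdisc.
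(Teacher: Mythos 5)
Your proof is correct, but your primary route is genuinely different from the paper's. The paper argues each case positionally: for (i) it notes that in an acute triangle the orthocenter $H$ lies inside the triangle, hence inside the circumcircle, so $OH<R$ and $ON=\tfrac12 OH<\tfrac{R}{2}$; for (ii) it observes that $H$ is the right-angle vertex, so the Euler circle passes through $O$ and is internally tangent to $\mathcal{C}$; for (iii) it shows the Euler circle contains points both inside the circumcircle (the side-midpoints) and outside it (a foot of an altitude), so the two circles are secant, whence $ON>R-\tfrac{R}{2}$. Your main argument instead establishes the single identity $OH^2=R^2\bigl(1-8\cos A\cos B\cos C\bigr)$ and reads off the whole trichotomy from the sign of $\cos A\cos B\cos C$, using that a triangle has at most one non-acute angle. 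That buys uniformity (one computation settles all three cases and gives the exact value of $ON$) at the price of a vector calculation and a trigonometric identity the paper never needs; note also that the central-angle step $\vec{OB}\cdot\vec{OC}=R^2\cos 2A$ survives the obtuse case only because $\cos(2\pi-2A)=\cos 2A$, which is worth saying. Your second, ``computation-free'' route is much closer in spirit to the paper's treatment of (iii) --- classifying the mutual position of two circles of radii $R$ and $R/2$ once the external configurations are excluded --- though it outsources the secant case to Lemma~\ref{lemma:euler-obtuse} where the paper re-derives it inside the proof. Both of your routes, like the paper's, take as known that $N$ is the midpoint of $OH$ and that the medial triangle is the half-scale image of $\triangle ABC$ (giving $r=R/2$); the paper does the same.
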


\begin{proof} Refer to 
figure ~\ref{fig:0801}.
 Euler circle is the circumcircle of $\triangle A_mB_mC_m$ 
which similar to $\triangle{ABC}$ itself, hence
the radius of their circumcircle shares the same proportionality rate.
Hence $r=\frac{R}{2}.$

i)     If the triangle   is acute,
its  orthocenter $H$  is 
inside the triangle; thus, $OH<R.$
On the other hand 
the center of Euler circle, $N,$ is the midpoint of $OH.$
This implies  $ON<\frac{R}{2}.$
Since the radius of the Euler circle is $\frac{R}{2},$ 
this ensures that Euler circle is contained into the circumcircle.

ii) If the triangle is right-angled, its orthocenter coincides 
with its right-angle  vertex, 
and the Euler circle of a right-angled triangle
tangents internally the circumcircle
and pass through its center.

iii) If the triangle is obtuse,  at least one feet of its altitudes 
lie outside the triangle, and outside the circumcircle.
Since the midpoints 
$A_m,B_m,C_m$  are on Euler circle, then by convexity, 
 are contained into the circumcircle  $\mathcal{C}$.
 Euler circle of an obtuse triangle therefore contain  both 
points inside and outside  the circumcircle, hence in this case the two are  secant.
\end{proof}

Now we shall prove that these  conditions are also sufficient.
 
 \begin{theorem} Let $\mathcal{C}=\mathcal{C(O,R)}$ and $\mathcal{E}=\mathcal{E}(N,\frac{R}{2})$ be two circles such that
 $0\leq ON<\frac{3}{2}R.$ Then there exists infinitely many triangles whose circumcircle is 
 $\mathcal{C(O,R)}$ and whose Euler circle is  $\mathcal{E}(N,\frac{R}{2}).$
 Further, 
 
i) if $ON<\frac{R}{2},$ then all these triangles  are acute; 
 
     ii) if  $ON=\frac{R}{2},$ then all triangles are right-angled;
     
     iii) $ON>\frac{R}{2},$ then all triangles are obtuse.
    \label{thm:sufficient-Euler-chapler}
\end{theorem}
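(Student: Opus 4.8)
The plan is to reduce Theorem~\ref{thm:sufficient-Euler-chapler} to Theorem~\ref{proposition:simetric-point-pole} by passing through the inverse circle $\mathcal{E}'$. The key observation is that the condition $0\le ON<\tfrac32 R$ is \emph{exactly} the condition that $\mathcal{E}=\mathcal{E}(N,\tfrac R2)$ lies in the region where Poncelet's porism for $n=3$ produces triangles: inverting $\mathcal{E}$ in $\mathcal{C}$ produces a circle $\mathcal{E}'$, and by Theorem~\ref{proposition:simetric-point-pole} the in-touch triangles of the poristic pair $(\mathcal{E}',\mathcal{C})$ are precisely the triangles with circumcircle $\mathcal{C}$ and Euler circle $\mathcal{E}$. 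So the two things I must check are: (a) that $\mathcal{E}'$ together with $\mathcal{C}$ actually forms a Poncelet pair for $n=3$ (i.e. the Euler--Chapple-type inequality holds for $(\mathcal{E}',\mathcal{C})$), and (b) that the full range $0\le ON<\tfrac32R$ gets used — equivalently, that every circle $\mathcal{E}$ in that range arises as the Euler circle of \emph{some} triangle inscribed in $\mathcal{C}$, so that Theorem~\ref{proposition:simetric-point-pole} applies. Once a single such triangle exists, the porism immediately upgrades ``some'' to ``infinitely many''.

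First I would compute the inverse circle $\mathcal{E}'$ explicitly. If $\mathcal{E}$ has center $N$ at distance $d=ON$ from $O$ and radius $\tfrac R2$, then inversion in $\mathcal{C}(O,R)$ sends $\mathcal{E}$ to a circle $\mathcal{E}'$ whose center $O'$ lies on line $ON$ and whose radius $\rho'$ are given by the standard formulas $O' = O + \dfrac{R^2}{d^2-R^2/4}\,(N-O)$ and $\rho' = \dfrac{R^2\cdot (R/2)}{\lvert d^2-R^2/4\rvert}$ (with the appropriate limiting interpretation as a line when $d=R/2$, which is exactly case~(ii), the right-angled degenerate situation already discussed in the text). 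Then I would verify the Chapple/Euler relation for the pair $(\mathcal{E}',\mathcal{C})$ — namely that the distance $\delta=OO'$ and the radii $R,\rho'$ satisfy $\delta^2 = R^2 - 2R\rho'$ (the condition for $\mathcal{C}$ to be the incircle of a triangle inscribed in $\mathcal{E}'$, with the roles possibly swapped to ``$\mathcal{E}'$ outer, $\mathcal{C}$ inner'' as in Definition~\ref{definition:poristic-circles}). This should drop out of the inversion formulas after a short algebraic manipulation; the sign of $d^2-R^2/4$ is what splits the computation into the acute case $d<R/2$ (where $\mathcal{E}'$ contains $\mathcal{C}$, matching part~(i)) and the obtuse case $d>R/2$ (where the two circles become secant, matching part~(iii) and Lemma~\ref{lemma:euler-obtuse}).

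For the existence of at least one triangle and the identification of the angle type, I would argue as follows. Given that $(\mathcal{E}',\mathcal{C})$ satisfies the porism condition (from the computation above), pick any point $A_1'$ on $\mathcal{E}'$ from which two tangents to $\mathcal{C}$ can be drawn that meet $\mathcal{E}'$ again; Poncelet closes the triangle $\triangle A_1'B_1'C_1'$, and taking the tangency points — equivalently, by Lemma~\ref{lemma:polar-polo-simetrico}, inverting $A_1',B_1',C_1'$ back in $\mathcal{C}$ — yields a triangle $\triangle A_1B_1C_1$ inscribed in $\mathcal{C}$. By the argument in the proof of Theorem~\ref{proposition:simetric-point-pole}, the midpoints of its sides are the inverses of $A_1',B_1',C_1'$, so they lie on the inverse of $\mathcal{E}'$, which is $\mathcal{E}$; hence $\mathcal{E}$ is its Euler circle. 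This produces the triangle, and the porism gives a one-parameter family, hence infinitely many. The angle-type trichotomy (i)--(iii) then follows immediately from Theorem~\ref{thm:necessary-Euler-chapler}: if one triangle with this $(\mathcal{C},\mathcal{E})$ were acute while another were obtuse, then $ON$ would have to be simultaneously $<\tfrac R2$ and $>\tfrac R2$, a contradiction — so the type is determined solely by where $ON$ sits relative to $\tfrac R2$, exactly as claimed, with the right-angled case $ON=\tfrac R2$ being the degenerate situation where $\mathcal{E}'$ is the tangent line and all triangles share the tangency vertex.

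The main obstacle I anticipate is the boundary behavior at $ON=\tfrac32R$ and the need to show the porism inequality for $(\mathcal{E}',\mathcal{C})$ holds throughout the open range and \emph{fails} at $ON=\tfrac32R$ (so the bound is sharp): as $d\to\tfrac32R$, one should check that $\mathcal{E}'$ degenerates to a point or escapes in a way that makes the tangent construction impossible, matching the fact that the circumcircle and Euler circle can be no further apart than allowed by the triangle inequality among $O$, $H$, and the vertices. A secondary subtlety is bookkeeping the two circles' inner/outer roles and the secant case in Definition~\ref{definition:poristic-circles}, and making sure the ``fertile arc'' restrictions from the obtuse discussion do not obstruct the existence of the initial point $A_1'$; but since Lemma~\ref{lemma:euler-obtuse} already guarantees a genuine obtuse triangle exists for each obtuse configuration, this is really just a matter of citing the earlier results rather than new work.
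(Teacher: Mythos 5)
Your proposal follows essentially the same route as the paper: invert $\mathcal{E}$ in $\mathcal{C}$ to get $\mathcal{E}'$, verify by direct computation that $(\mathcal{E}',\mathcal{C})$ satisfies an Euler--Chapple relation and hence is a poristic pair for $n=3$, invoke Theorem~\ref{proposition:simetric-point-pole} to identify the in-touch triangles as exactly those with circumcircle $\mathcal{C}$ and Euler circle $\mathcal{E}$ (infinitely many by the porism), and read off the acute/right/obtuse trichotomy from Theorem~\ref{thm:necessary-Euler-chapler}. The one correction needed is in the relation you propose to check: with $\mathcal{E}'$ as the outer circle of radius $\rho'$ it should be $\delta^2=\rho'(\rho'-2R)$ when $ON<\tfrac R2$ and the ex-circle variant $\delta^2=\rho'(\rho'+2R)$ when $ON>\tfrac R2$ (the secant case), which is precisely the two-case computation the paper carries out.
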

  We shall give an indirect prove:  
 we show that if these conditions are fulfilled, then   
 $(\mathcal{E}', \mathcal{C})$ form a poristic pair of circles.
 To see this, we shall need the following
 fact.
 
 \begin{lemma}
If  $(\mathcal{C}, \mathcal{E})$ verify the conditions above, then 
     $(\mathcal{E}', \mathcal{C})$ form a poristic pair for $n=3.$
 \end{lemma}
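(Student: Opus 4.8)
The plan is to reduce the claim to the Euler-Chapler relation and then quote Poncelet porism. Recall that a pair of circles --- one of radius $\mathcal{R}$ carrying the vertices of the triangle, one of radius $r$ met by the sides (or their lines), the centres a distance $d$ apart --- is poristic for $n=3$ exactly when
\[
d^{2}=\mathcal{R}^{2}-2\mathcal{R}r\qquad\text{or}\qquad d^{2}=\mathcal{R}^{2}+2\mathcal{R}r,
\]
the first equality when the second circle lies inside (or is secant to) the first, the second one in the exscribed configuration; this is the Euler-Chapler formula, valid for secant circles as well ([8]). By Corollary~\ref{cor:corolario1}, $\mathcal{E}'$ is the inverse of $\mathcal{E}$ with respect to $\mathcal{C}$, so the task is to compute the centre and radius of this inverse and check one of the two equalities, with $\mathcal{E}'$ playing the role of the vertex circle and $\mathcal{C}$ that of the circle touched by the sides.

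First I would record the inversion data. Write $e=ON$ and assume $e\neq R/2$ (the value $e=R/2$ is the right-angle situation, where $\mathcal{E}$ passes through $O$ and its inverse degenerates to the tangent line to $\mathcal{C}$ at the contact point of $\mathcal{E}$ and $\mathcal{C}$; that case has already been described). Since the power of $O$ with respect to $\mathcal{E}(N,R/2)$ equals $e^{2}-R^{2}/4$, the inverse circle $\mathcal{E}'$ has
\[
\text{radius }\ \rho'=\frac{R^{3}/2}{\bigl|\,e^{2}-R^{2}/4\,\bigr|}\,,\qquad
\text{centre }N'\text{ with }\ ON'=\frac{R^{2}e}{\bigl|\,e^{2}-R^{2}/4\,\bigr|}\,.
\]
A one-line computation then gives
\[
\rho'^{\,2}-ON'^{\,2}=\frac{(R^{3}/2)^{2}-(R^{2}e)^{2}}{(e^{2}-R^{2}/4)^{2}}
=\frac{-R^{4}}{\,e^{2}-R^{2}/4\,}\,,\qquad
2\rho'R=\frac{R^{4}}{\bigl|\,e^{2}-R^{2}/4\,\bigr|}\,,
\]
so that $\rho'^{\,2}-ON'^{\,2}=2\rho'R$ when $e<R/2$ and $ON'^{\,2}-\rho'^{\,2}=2\rho'R$ when $e>R/2$. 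In the first range one checks $ON'+R<\rho'$ (it reduces to $-(e-R/2)^{2}<0$), so $\mathcal{C}$ lies strictly inside $\mathcal{E}'$ and the inscribed Euler-Chapler relation holds. In the second range $R/2<e<\tfrac32R$ forces $\mathcal{E}$ and $\mathcal{C}$ to be secant, and because inversion in $\mathcal{C}$ fixes the two points of $\mathcal{E}\cap\mathcal{C}$, the circle $\mathcal{E}'$ is secant to $\mathcal{C}$ with $\mathcal{C}$ in the exscribed position, so the exscribed relation holds. Either way one of the two Euler-Chapler equalities is satisfied, and Poncelet porism (in the form valid for intersecting circles as well, [8]) produces a triangle inscribed in $\mathcal{E}'$ whose side-lines are tangent to $\mathcal{C}$; hence $(\mathcal{E}',\mathcal{C})$ is a poristic pair for $n=3$.

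I expect the only real obstacle to be the obtuse/secant branch: there one must use the \emph{exscribed} form of the porism condition, check that $\mathcal{E}'$ and $\mathcal{C}$ are genuinely in that configuration (this is exactly where the observation that inversion fixes $\mathcal{E}\cap\mathcal{C}$, together with the hypothesis $e<\tfrac32R$ which prevents $\mathcal{E}'$ from degenerating or sliding inside $\mathcal{C}$, is needed), and invoke the secant version of Poncelet's theorem. The acute case, the degenerate value $e=R/2$, and the bookkeeping with absolute values are routine.
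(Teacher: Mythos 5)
Your proposal is correct and follows essentially the same route as the paper: both reduce the claim to verifying an Euler--Chapler relation for the pair $(\mathcal{E}',\mathcal{C})$, with $\mathcal{E}'$ as the vertex circle and $\mathcal{C}$ as the circle touched by the sides, by explicitly computing the centre and radius of the inverse of $\mathcal{E}$ in $\mathcal{C}$ and then invoking Poncelet's porism (the paper normalizes $\mathcal{C}$ to the unit circle and inverts the diameter endpoints, while you use the general power-of-a-point formula, but the computation is the same identity). Your additional check of which configuration (inscribed versus exscribed) actually occurs in each range of $ON$ is a small improvement on the paper, which asserts rather than verifies this.
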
  
   
   \begin{proof} 
   Two circles form a poristic pair for $n=3$ if and only if they are either the circumcircle and inscribed circle or circumcircle and exinscribed circle. 
   Euler-Chapler  formulas guarantees that this hapens if and only if  $\mathcal{E}'$ and $\mathcal{C}$  verify 
     one of the following relations: 
    $$i ) (R_1-r_1)^2=d_1^2+r_1^2;\;\;\;(1)$$ 
    $$ ii) (R_1+r_{1ex})^2=d_{1}^2+r_{1ex}^2  \;\;\;(2)$$
    
    The first relation 
    is the necessary and sufficient condition in order to a pair  of circles which have
     radius $R_1,r_1$ and whose centers dist $d_1,$
    be, respectively, the circumcircle and the
    inscribed circle of some triangle.
    
    The second relation 
    is the necessary and sufficient condition in order to a pair of circles
    of radius $R_1,r_{1ex}$ and whose centers dist $d_1,$
    be, respectively, the circumcircle and the
    ex-inscribed circle of some triangle.

   For our proof, $R_1$ is  the radius of   $\mathcal{E}'$, (as the  external circle), 
    $r_1$ the radius of $\mathcal{C}$ (as the "inscribed circle") and $r_{1ex}$ the radius 
    of an "ex-inscribed" circle; and  
     $d_1$ is  the distance between the centers of $\mathcal{E}'$ and $\mathcal{C}$.

    We prove that the first case 
   occurs when $0\leq ON<\frac{R}{2}$ and the second, when $\frac{R}{2}< ON<\frac{3}{2}R.$

The  proof now reduces to  a straightforward verification.
We use complex numbers.

In order to simplify the computations, assume, 
without lose of generality, that
 $\mathcal{C}$ is the unit circle, (the circle centred in $0$ and radius $1$ and let $d$ be the center of a circle $\mathcal{E}_d$ of radius 
$\frac{1}{2}$ (half if the radius of $\mathcal{C}$) and  whose interior  intercepts $\mathcal{C}.$

Again, without lose of  generality, we may assume that its center is a point  on the real positive line; therefore
$d\in[0,3/2)$ and $d\neq 1.$ 
We shall treat the case $d=1$ separately, since it is the tangency case. 

In order to prove the first assertion, note that the center of the given circle
$\mathcal{E}=\mathcal{E}_d$
whose radius is half the radius of  $\mathcal{C},$ hence $\frac{1}{2},$ is located at a distance $d\in[0, \frac{1}{2})$, being internal circle.
We want to prove that $(\mathcal{E}'_d,\mathcal{C})$ form a poristic pair
of circles for $n=3.$

The diameter of $\mathcal{E}_d$ is $[AB]$ where 
$$A=d-\frac{1}{2} \;\hbox{and}\; B=d+\frac{1}{2};$$ let $d_0=2d\in(0,1);$
hence $A=\frac{1}{2}(d_0-1)<0,$ $B=\frac{1}{2}(d_0+1)>0.$
Now let $$A'=\frac{2}{d_0-1}\; \hbox{and}\; B'=\frac{2}{d_0+1}$$ be the inverses of $A,B$
w.r. to the unit circle; then the inverses of $\mathcal{C}_d$ is the circle
$\mathcal{C}'_d$ whose diameter is $[A'B'].$
Its center is 
$$O'=\frac{A'+B'}{2}=\frac{2d_0}{d_0^2-1}\;\hbox{and its radius is} R'=\frac{2}{d_0^2-1}.$$
Now we check that these two circles verifies the Euler-Chapler relations.
$$(R-r)^2=d^2+r^2,\hbox{where}\; r=1,\hbox{and}\; R=\frac{2}{d_0^2-1}, 
d=\frac{2d_1}{1-d_1^2}.$$
This relation becomes
$$(\frac{2}{d_0^2-1}-1)^2=\frac{4d_0^2}{(d_0^2-1)^2}+1,$$  
$$\big(\frac{d_0^2+1}{d_0^2-1}\big)^2=\frac{4d_0^2+d_0^4-2d_0^2+1}{(d_0^2-1)^2}.$$
which is obviously verified!

The second assertion, in which the center of the given circle
$\mathcal{E}_d$
with radius 
$\frac{1}{2}$ is at a distance $d\in[\frac{1}{2},\frac{3}{2})$, hence is a secant circle, proves just in the same way.

The diameter of $\mathcal{C}_d$ is $[AB],$ where 
$A=d-\frac{1}{2}$ and $B=d+\frac{1}{2};$ let $d_0=2d\in(1,3);$
hence $A=\frac{1}{2}(d_0-1),$ $B=\frac{1}{2}(d_0+1).$
Now let $A'=\frac{2}{d_0-1}$ and $B'=\frac{2}{d_0+1}$ be the inverses of $A,B$ w.r. to the unit circle; then the inverses of $\mathcal{E}_d$ is the circle
$\mathcal{E}'_d$ whose diameter is $[A'B'].$
Its center is $O'=\frac{A'+B'}{2}=\frac{2d_0}{d_0^2-1}$ and its radius is $R'=\frac{2}{d_0^2-1}.$
Now we check that these two circles verifies the relation given by Euler-Chapler theorem.
$$(R+r_{ex})^2=d^2+r^2_{ex},\;\;\;r_{ex}=1,\;\; R=R'.$$
This relation become
$$(\frac{2}{d_0^2-1}+1)^2=\frac{4d_0^2}{(d_0^2-1)^2}+1,$$  
$$\big(\frac{d_0^2+1}{d_0^2-1}\big)^2=\frac{4d_0^2+d_0^4-2d_0^2+1}{(d_0^2-1)^2},$$
which is obviously verified.

This expression does not make sense when $d=1.$ But this only occurs when the Euler circle tangent the circumcircle.
The tangency case is straightforward, it does not require any  poristic computations and we omit it.

    \end{proof}
    
We thus proved that any circle whose interior  intercepts a given circle 
and whose radius is half that of the circle,  is an Euler circle for some (hence infinitely many) triangles.
 
Finally, let us illustrate how we can generate all the triangles that shares the same  circumcircle and Euler circle with a given triangle and how we can draw the triangle with a prescribed vertices.

Any poristic pair of circles 
generates two  poristically-related families of  triangles: 
the circumscribed triangles, whose vertices are on the outer circle
and the inner (or in-touch) triangle, whose vertices are tangency point of
tangents form points of  outer circle,  to the inner circle. Thus, any outer triangle is  poristically bounded to one 
(and only one) inner triangle.

There are two families of poristic triangles w.r. to $(\mathcal{E}', \mathcal{C}):$
$\mathcal{T'},$ containing  all triangles 
 whose vertices are on $\mathcal{E}'$ and whose sides tangent the circle 
$\mathcal{C}$ (the "outer" triangles) and $\mathcal{T}$ the family of 
all triangles whose vertices are the tangency point of the sides of triangles in $\mathcal{T'},$
with circle $\mathcal{C},$ (the inner triangles, whose vertices are on 
$\mathcal{C}$ and whose Euler circle is $\mathcal{E}.$

Note that Poncelet porism is still valid when the poristic circles
$(\mathcal{E}',  \mathcal{C})$ are secant; the only restriction is on the location of the initial point $B_1':$ if it  lie inside the circumcircle, we cannot trace tangents from it, to $\mathcal{C}$. Equivalently, one side of the poristic in-touch triangle (inscribed in $\mathcal{C}$)
touch the arc $\mathcal{C})$ on a point inside $\mathcal{E}'$.  

The two circles    of the poristic pair are disjoint, if the triangle is acute and secant, if 
the triangle is obtuse.

A geometric  recipe to construct some  triangle, 
given its circumcircle, its Euler circle, and one of its vertices is now straightforward.
\begin{corollary} Let  $\mathcal{C}$ and $\mathcal{E}$ two non-tangent
circles that are circumcircle and Euler circle for some triangle $\triangle{ABC}$ and let
and $A_1$ a point on $\mathcal{C}.$

 i) Let $\mathcal{E}'$ be the inverses of $\mathcal{E}$ w.r. to 
$\mathcal{C}.$ Let $a'$, the tangent at $A_1$ to the circle  $\mathcal{C};$
then:
\begin{itemize}
    \item 
i) if $a'$ does not intercept the circle $\mathcal{E}'$ in two distinct points 
 there is no such triangle.
\item
ii) if $a'$ is secant to $\mathcal{E}'$
in $B'$ and $C'$,
then let the tangents $b',c'$ from $B'$ and $C'$ 
tangent the circle $\mathcal{C}$ in two other points $B,C;$ these
who are the vertices of the required triangle.
\end{itemize}
\end{corollary}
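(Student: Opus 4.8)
The plan is to read the statement off Theorem~\ref{proposition:simetric-point-pole}. Since $\mathcal{C}$ and $\mathcal{E}$ are non-tangent and actually arise as circumcircle and Euler circle of a triangle, the inverse $\mathcal{E}'$ of $\mathcal{E}$ with respect to $\mathcal{C}$ is a genuine circle (the degenerate right-angle case, where $\mathcal{E}'$ collapses to the tangent line, is precisely what ``non-tangent'' excludes), and by Theorem~\ref{proposition:simetric-point-pole}(i) the pair $(\mathcal{E}',\mathcal{C})$ is poristic for $n=3$, while by part~(ii) the triangles inscribed in $\mathcal{C}$ with Euler circle $\mathcal{E}$ are exactly the in-touch triangles of this pair. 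So the task reduces to deciding when a prescribed point $A_1\in\mathcal{C}$ can be a vertex of such an in-touch triangle, and to producing it.

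First the necessity, i.e.\ statement~(i). If $\triangle A_1BC$ is such an in-touch triangle, there is a circumscribed triangle $\triangle A'B'C'$ with vertices on $\mathcal{E}'$ whose three sides are tangent to $\mathcal{C}$, the tangency points being $A_1$, $B$, $C$. The side tangent to $\mathcal{C}$ at $A_1$ must lie along $a'$, because $a'$ is the \emph{only} line tangent to $\mathcal{C}$ at $A_1$; hence the two endpoints of that side, being vertices of $\triangle A'B'C'$ and therefore points of $\mathcal{E}'$, both lie on $a'\cap\mathcal{E}'$. A line meets a circle in at most two points, so $a'$ must be secant to $\mathcal{E}'$; if it is not, no such triangle exists.

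For~(ii) I would reverse the construction. Put $\{B',C'\}=a'\cap\mathcal{E}'$ with $B'\neq C'$; since $a'$ is tangent to $\mathcal{C}$ at $A_1$, every point of $a'$ other than $A_1$ is exterior to $\mathcal{C}$, so (excluding the boundary case $A_1\in\mathcal{E}'$, which in the obtuse setting means $A_1$ is one of the two common points of $\mathcal{E}$ and $\mathcal{C}$) both $B'$ and $C'$ are strictly outside $\mathcal{C}$ and carry a second tangent $b'$, $c'$ to $\mathcal{C}$, touching it at two further points $B$, $C$. Now apply Poncelet's porism for $(\mathcal{E}',\mathcal{C})$ --- valid also for secant circles, as recalled earlier: since $B',C'\in\mathcal{E}'$ and the line $B'C'=a'$ is tangent to $\mathcal{C}$, the tangents $b'$ and $c'$ meet at a point $A'\in\mathcal{E}'$. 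Thus $\triangle A'B'C'$ is a circumscribed triangle of the pair whose sides touch $\mathcal{C}$ at $A_1$, $B$, $C$, and $\triangle A_1BC$ is its in-touch triangle; by Theorem~\ref{proposition:simetric-point-pole}(ii) it has circumcircle $\mathcal{C}$ and Euler circle $\mathcal{E}$ and, by construction, vertex $A_1$. The necessity argument moreover pins down $\triangle A'B'C'$, hence $\triangle A_1BC$, uniquely up to the relabelling $B\leftrightarrow C$.

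The substance of the proof lies entirely in two bookkeeping points: that the side of the circumscribed triangle through the prescribed vertex is forced to be the tangent line $a'$, so its endpoints must be $a'\cap\mathcal{E}'$; and that the closing-up of $\triangle A'B'C'$ in the obtuse/secant case is covered by the form of Poncelet's porism used throughout. The remaining care is just in isolating the genuinely degenerate inputs --- $a'$ tangent to or disjoint from $\mathcal{E}'$, handled by~(i), and $A_1$ lying on $\mathcal{E}'$ itself, where $a'$ supports only one tangent to $\mathcal{C}$ and the recipe stalls.
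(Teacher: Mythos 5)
Your argument is correct and follows exactly the route the paper intends: the corollary is stated there without proof as a ``straightforward'' consequence of Theorem~\ref{proposition:simetric-point-pole} and of Poncelet's porism for the pair $(\mathcal{E}',\mathcal{C})$, which is precisely the reduction you carry out. Your write-up is in fact slightly more careful than the paper, since you make explicit the forcing of the circumscribed side through $A_1$ to be $a'$, the closing-up of $\triangle A'B'C'$ via the porism in the secant case, and the boundary case $A_1\in\mathcal{E}'$ that the paper's statement silently ignores.
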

 
 Note that the case $i)$ can only occur when the two circles are secant. In this case, this happens
 because the point $A_1$ lie into an unreachable arc of $\mathcal{C}.$

\begin{corollary}
A pair $(\mathcal{C},\mathcal{E})$ of 
internally tangent circles at a point $A$
are the circumcircle and Euler circle of some triangle, 
if and only if the circle $\mathcal{E}$ pass through the center of $\mathcal{C}.$ 
The triangles which have a vertex in $A$ and whose side is a diameter of the circle passing through point $A_1,$
are inscribed in $\mathcal{C}$ and share the Euler circle 
$\mathcal{C}$ and is the requested triangle.
If the two circles  are externally tangent, there is no such a  triangle.
\label{corollary:poristic-euler-pair}

\end{corollary}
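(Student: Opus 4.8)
The plan is to treat the internally tangent case and the externally tangent case separately, and in the internally tangent case to reduce everything to the right-angle triangle picture already described around Figure~\ref{fig:euler-retangulo} and in the right-angle discussion in Theorem~\ref{thm:necessary-Euler-chapler}(ii) and Theorem~\ref{thm:sufficient-Euler-chapler}(ii). First I would normalise: place the tangency point at $A$, take $\mathcal{C}=\mathcal{C}(O,R)$, and let $\mathcal{E}$ have radius $\rho$ and center $N$ on the segment $OA$ with $ON=R-\rho$ (internal tangency). By Theorem~\ref{thm:necessary-Euler-chapler}, a necessary condition for $\mathcal{E}$ to be an Euler circle of any triangle is $\rho=R/2$; combined with internal tangency this forces $ON=R/2$, i.e. $N$ is the midpoint of $OA$, which is exactly the statement that $\mathcal{E}$ passes through $O$, the center of $\mathcal{C}$. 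This already gives the "only if" direction of the first claim.

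For the "if" direction, suppose $\mathcal{C}$ and $\mathcal{E}$ are internally tangent at $A$ and $\mathcal{E}$ passes through $O$; then automatically $\rho=R/2$ and $ON=R/2$, so by Theorem~\ref{thm:sufficient-Euler-chapler}(ii) the pair is realised by (infinitely many) right-angled triangles, and by the right-angle discussion all such triangles have their right-angle vertex at the tangency point $A$. It remains to identify these triangles concretely, which is the content of the second sentence of the corollary. Here the key fact is the classical one: for a right-angled triangle the hypotenuse is a diameter of the circumcircle and the midpoint of the hypotenuse is the circumcenter; dually, the Euler (nine-point) circle of a right triangle is the circle on the segment joining the right-angle vertex to the circumcenter as diameter — equivalently the circle through the two midpoints of the legs and the foot of the altitude from $A$. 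So I would take any point $A_1$ on $\mathcal{C}$ distinct from $A$, let $A_1A_1^{*}$ be the diameter of $\mathcal{C}$ through $A_1$, and let $B,C$ be... more simply: take the diameter of $\mathcal{C}$ through $A$ has the wrong endpoint, so instead take any line through $O$ meeting $\mathcal{C}$ in $B,C$; then $\triangle ABC$ is right-angled at $A$, inscribed in $\mathcal{C}$, and a direct check (its orthocenter is $A$, the nine-point center is the midpoint of $AO$, radius $R/2$) shows its Euler circle is exactly $\mathcal{E}$. This matches the phrase "side is a diameter of the circle" — the side $BC$ is a diameter of $\mathcal{C}$ — and exhibits the full family.

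For the externally tangent case, I would argue by contradiction using Theorem~\ref{thm:necessary-Euler-chapler} again: any circumcircle–Euler pair has $\rho=R/2$ and $ON<\tfrac32 R$ (indeed $ON\le R/2$ when realised, with equality only in the internally tangent right-angle case, and strict inequality otherwise by parts (i)–(iii)). External tangency would require $ON=R+\rho=\tfrac32 R$, which is incompatible with the realisability range $0\le ON<\tfrac32 R$ coming from Theorem~\ref{thm:sufficient-Euler-chapler}; hence no triangle has such a pair.

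I expect the only delicate point to be the verification that the exhibited right-angled triangles indeed have $\mathcal{E}$ (and not merely a circle of radius $R/2$ through $O$) as nine-point circle — but since the nine-point center of a right triangle is forced to be the midpoint of the segment from the right-angle vertex to the circumcenter, and both the right-angle vertex ($A$) and the circumcenter ($O$) are pinned down, the nine-point circle is uniquely $\mathcal{E}$, so this is immediate. Everything else reduces to quoting Theorems~\ref{thm:necessary-Euler-chapler} and~\ref{thm:sufficient-Euler-chapler} and the elementary geometry of right triangles; no poristic computation is needed, consistent with the remark already made in the text.
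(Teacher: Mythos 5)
The paper never actually proves this corollary --- the tangency case is explicitly set aside in the lemma preceding Theorem~\ref{thm:sufficient-Euler-chapler} as ``straightforward'' and omitted --- so your write-up is supplying an argument where the paper has none, and your route (normalise, use the necessary condition $r=R/2$ from Theorem~\ref{thm:necessary-Euler-chapler}, then exhibit the family directly as the right triangles on diameters $BC$ of $\mathcal{C}$ with right angle at $A$) is the natural one and is essentially correct. Two points deserve attention. First, your parenthetical ``indeed $ON\le R/2$ when realised, with equality only in the internally tangent right-angle case'' contradicts Theorem~\ref{thm:necessary-Euler-chapler}(iii), which gives $ON>R/2$ for obtuse triangles; fortunately this claim is not load-bearing and should simply be deleted. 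Second, in the externally tangent case you rule out $ON=\tfrac32 R$ by appeal to the range $0\le ON<\tfrac32 R$ ``coming from Theorem~\ref{thm:sufficient-Euler-chapler}'' --- but that theorem only asserts sufficiency of the range, not its necessity, and Theorem~\ref{thm:necessary-Euler-chapler} states no upper bound on $ON$ at all. The gap is easily closed: the Euler circle passes through the midpoints of the sides, which are interior points of chords of $\mathcal{C}$ and hence lie strictly inside $\mathcal{C}$, so the Euler circle always meets the open disc bounded by $\mathcal{C}$ and can never be externally tangent to it. With that one-line repair, and noting that your direct construction of the right triangles (circumcenter $O$, orthocenter $A$, nine-point centre the midpoint of $OA$, radius $R/2$, hence nine-point circle equal to the circle on diameter $[OA]$, which is forced to be $\mathcal{E}$) stands on its own without invoking Theorem~\ref{thm:sufficient-Euler-chapler}(ii) --- whose proof in the paper excludes exactly this degenerate case --- the argument is complete and, if anything, more self-contained than what the paper offers.
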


\begin{figure}
\centering
\includegraphics[trim=300 150 300 60,clip,width=1.0\textwidth]{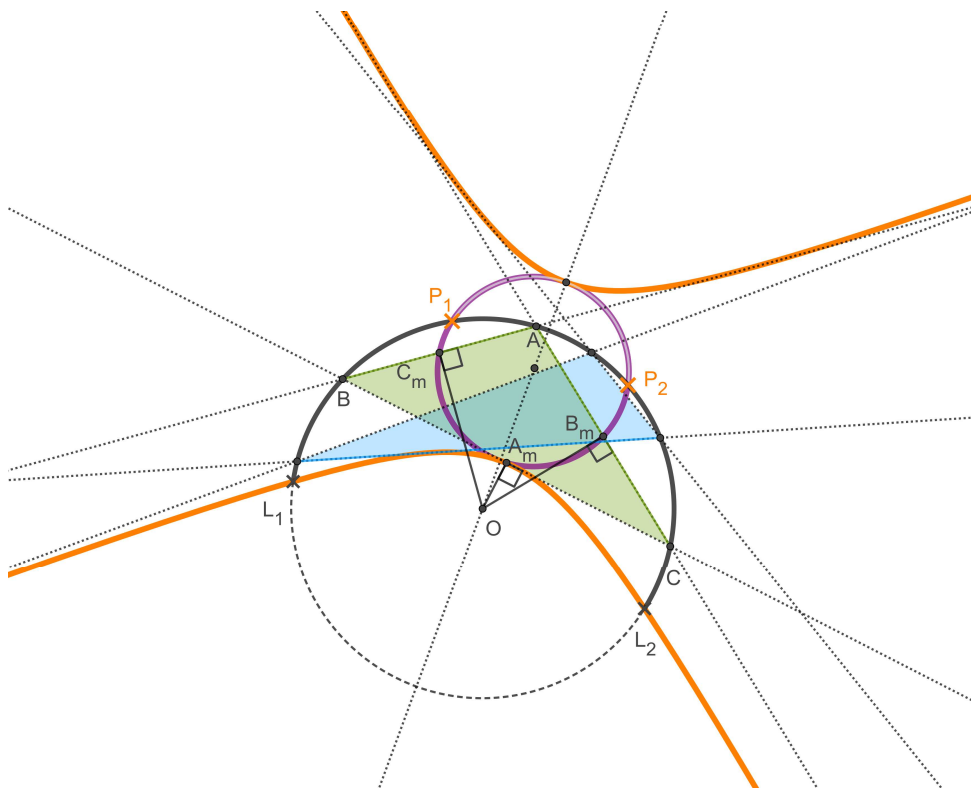}
\caption{
The i-conic and the circumcircle form a poristic pair of conics.
The i-conic (orange hyperbola) is precisely the negative pedal curve of Euler circle, w.r. to pedal point $O,$ the circumcenter of $\triangle ABC.$ 
All poristic triangles 
w.r. to circumcircle and the i-conic share the same Euler circle.}
\label{fig:1000_Euler_pedal_Passo0}
\end{figure}

\section{A PORISTIC PROOF}

This section reveals yet another  poristical bound between Euler circle and  circumcircle.
This time, we shall deal with two  poristic pairs: the circumcircle  and the i-conic, the conic  inscribed into the triangle and whose focus is in its  circumcenter.
The  relationship between the i-conic, circumcircle and  Euler circle, in a twisting of interchangeable poristic terns,  spotted in 
a recent paper by R. Garsia and D. Reznik (see [2]), will enable  a very short 
 poristic proof of our main result.

To this end, 
 we shall use inversive techniques and polar reciprocity; the reader not familiar  with this concepts may want to see the beautiful books 
[1], [9]; for ad-hoc  details see [4], Appendix.

 Let $\triangle{ABC}$ and  
    $\mathcal {C}$ its circumcircle;  let $\mathcal{E}$ its Euler circle and 
    $\mathcal{E}'$ 
    the inverse of Euler circle, w.r. to the circumcircle.
    
    We shall denote by
     $\Gamma$ its i-conic, the conic tangent to the sides of triangle $\triangle{ABC}$ and with focus in $O,$ the circumcenter.

The following lemma will prove that the poristic family of triangles associated with this pair, share the same Euler circle.

\begin{definition} (see Appendix)
The negative pedal of a curve $\gamma$ w.r. to a pedal point $D$ is the curve $\Gamma,$ whose tangents are the perpendicular on $M$ to $DM,$ as $M$ sweeps the curve $\gamma.$
\label{def:pedal_negativa}
\end{definition}
The following fact is classic.
\begin{lemma} 
The negative pedal curve  $\Gamma=\mathcal{N}(\mathcal{E})$ of a circle   $\mathcal{E},$ w.r. to a pedal point $D$  
is a conic $\gamma_D,$ which has a focus in $D,$ and  whose  axis coincides with the diameter  of  $\mathcal{E}$ through $D$.

 $\Gamma$ is an ellipse (resp. hyperbola), iff $D$ is inside (resp. outside) $\mathcal{E}$.
\label{lemma:negative-pedal-iconic}
\end{lemma}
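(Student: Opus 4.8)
The plan is to realize $\Gamma$ as the envelope of the family of lines $\ell_M$ of Definition~\ref{def:pedal_negativa} and then to recognize this envelope through the reflection (``string'') characterization of conics. Write $\mathcal{E}=\mathcal{E}(O_E,\rho)$. For $M\in\mathcal{E}$ the line $\ell_M$ is the perpendicular to $DM$ through $M$; since $DM\perp\ell_M$ and $M\in\ell_M$, the point $M$ is the foot of the perpendicular from $D$ onto $\ell_M$, so the reflection of $D$ across $\ell_M$ is $D_M:=2M-D$. Equivalently, $\ell_M$ is precisely the perpendicular bisector of the segment $DD_M$. As $M$ runs over $\mathcal{E}(O_E,\rho)$ the point $D_M=2M-D$ runs over the circle $\mathcal{F}:=\mathcal{F}(2O_E-D,\,2\rho)$, and the midpoint $\tfrac12(D+D_M)=M$ shows that $\mathcal{E}$ is the image of $\mathcal{F}$ under the homothety of ratio $\tfrac12$ centred at $D$. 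Thus the family $\{\ell_M\}$ is exactly the family of perpendicular bisectors of the segments joining the fixed point $D$ to the variable point $D_M\in\mathcal{F}$.

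Next I would identify this envelope. Set $F:=2O_E-D$ (the centre of $\mathcal{F}$) and let $\gamma_D$ be the conic with foci $D$ and $F$ for which the sum, respectively the absolute difference, of the focal radii equals $2\rho$, according as $|DF|<2\rho$ or $|DF|>2\rho$. Given $P\in\gamma_D$, let $D'$ be the point on the ray $FP$ at distance $2\rho$ from $F$ (beyond $P$ in the ellipse case, between $F$ and $P$ in the hyperbola case); then $D'\in\mathcal{F}$ and $PD=PD'$, so $P$ lies on the perpendicular bisector of $DD'$, which by the focal reflection law is the tangent to $\gamma_D$ at $P$. Since $\tfrac12(D+D')\in\mathcal{E}$, this tangent is one of the lines $\ell_M$; running $P$ over $\gamma_D$, and conversely matching each $M\in\mathcal{E}$ with the tangency point of $\ell_M$, gives $\Gamma=\gamma_D$. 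Finally $|DF|=2|D-O_E|$, so $D$ is interior to $\mathcal{F}$ — the ellipse case — iff $|D-O_E|<\rho$, i.e. iff $D$ lies inside $\mathcal{E}$, and the exterior/hyperbola case is symmetric; the centre of $\gamma_D$ is the midpoint of its foci, $\tfrac12(D+F)=O_E$, and its focal axis is the line $DF=DO_E$, which is the diameter of $\mathcal{E}$ through $D$, as claimed.

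The main obstacle is the classical fact invoked in the second step: that the perpendicular bisector of $DD'$ is tangent to $\gamma_D$ at the point where it meets $\gamma_D$, and that every tangent of $\gamma_D$ is obtained in this way — equivalently, that the envelope of these perpendicular bisectors is $\gamma_D$ and nothing more. I would either quote this from the inversive/projective geometry references (for instance [1], [9], or [4, Appendix]) or supply the short argument via the reflection law of the ellipse and hyperbola. One should also flag the two degenerate configurations excluded by the stated dichotomy, namely $D=O_E$, for which the foci coincide and $\Gamma$ is the circle $\mathcal{E}$ itself, and $D\in\mathcal{E}$, for which $\mathcal{F}$ passes through $D$ and $\Gamma$ degenerates to a parabola; neither interferes with the cases needed in the sequel.
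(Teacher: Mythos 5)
Your argument is correct, and it takes a genuinely different route from the paper's. The paper offers no proof of this lemma at all: it declares the fact classical and points the reader to [5] and to the Appendix of [4], and the machinery it has in mind there is inversive--projective, namely that the negative pedal $\mathcal{N}(\mathcal{E})$ is the polar dual of the inverse of $\mathcal{E}$ in a circle centred at $D$, after which one quotes the classification of duals of circles (ellipse/parabola/hyperbola according to the position of the inversion centre). You instead work entirely metrically: observing that $M$ is the foot of the perpendicular from $D$ to $\ell_M$, you rewrite $\ell_M$ as the perpendicular bisector of $DD_M$ with $D_M=2M-D$ sweeping the focal circle $\mathcal{F}(2O_E-D,2\rho)$, and then invoke the reflection characterization of conics to identify the envelope as the conic with foci $D$ and $2O_E-D$ and focal-radius sum or difference $2\rho$. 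Your approach is more elementary and self-contained, and it delivers the centre ($O_E$), the axis (the line $DO_E$), and the inside/outside dichotomy by a one-line computation $|DF|=2|DO_E|$; the paper's route is heavier but is the one that integrates with the rest of its toolkit (it reuses the same duality for the i-conic and for the circumconic, and it handles the case $D\in\mathcal{E}$ uniformly, the inverse being a line and its dual a point). The one step you rightly flag as borrowed -- that the perpendicular bisectors of $DD'$, $D'\in\mathcal{F}$, envelope exactly $\gamma_D$ -- is no worse an appeal to the classics than the paper's own wholesale citation.

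One small factual slip in your closing aside: when $D\in\mathcal{E}$ the negative pedal does not degenerate to a parabola but to a single point, the antipode of $D$ on $\mathcal{E}$ (every line $\ell_M$ passes through it; equivalently, the inverse of $\mathcal{E}$ is a line and the dual of a line is its pole). Parabolas arise only as negative pedals of lines. Since $D\in\mathcal{E}$ is excluded from the lemma's dichotomy this does not affect your proof, but the remark as written is wrong.
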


The  reader not acquainted with these concepts,  
may either see  [5], and references therein, or Appendix of
[4], for a short briefing.

This fact has an immediate consequences; see also [2], Theorem 1.
\begin{corollary}
The i-conic ${\gamma}_{D}$ is precisely 
   the negative pedal curve of  ${\mathcal{E}}_D,$ w.r. to pedal point $D.$

The in-ellipse and Euler pedal circle  tangents at the vertices of i-conic,
have the same center and  the main axis of the in-ellipse is a diameter of the Euler pedal circle.
\label{lemma:euler-negative-pedal}
\end{corollary}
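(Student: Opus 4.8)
The plan is to read both statements off Lemma~\ref{lemma:negative-pedal-iconic}, applied to the Euler circle $\mathcal{E}$ with pedal point $D=O$, the circumcenter. That lemma gives for free that $\Gamma:=\mathcal{N}(\mathcal{E})$, the negative pedal of $\mathcal{E}$ w.r.t. $O$, is a conic with a focus at $O$ whose axis is the diameter of $\mathcal{E}$ through $O$, and that it is an ellipse, a parabola or a hyperbola according as $O$ lies inside, on, or outside $\mathcal{E}$; by Theorem~\ref{thm:necessary-Euler-chapler} these three cases are respectively the acute, right and obtuse triangles. So the first assertion reduces to checking that $\Gamma$ is tangent to the three side-lines $BC$, $CA$, $AB$: granting this, $\Gamma$ is a conic with focus $O$ tangent to the three sides, hence it coincides with the i-conic $\gamma_O$ (such a conic being unique --- in the dual plane it is the conic through the three points dual to the sides together with the two tangency conditions at $O$; cf. [2], Theorem~1).

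For the tangency I would use Definition~\ref{def:pedal_negativa} in the equivalent form: a line $\ell$ is tangent to $\Gamma$ if and only if the foot of the perpendicular from $O$ to $\ell$ lies on $\mathcal{E}$. Now the foot of the perpendicular from the circumcenter $O$ to the side $BC$ is the midpoint $A_m$ of $BC$, because $O$ lies on the perpendicular bisector of $BC$; and $A_m\in\mathcal{E}$ since $\mathcal{E}$ is the nine-point circle. The same applies to $CA$ and $AB$ with the midpoints $B_m$, $C_m$. Hence the three sides are tangent to $\Gamma$, proving the first assertion.

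For the second assertion I would take $\triangle ABC$ acute, so $\Gamma$ is an ellipse whose major axis lies along the line $ON$ (the diameter of $\mathcal{E}$ through $O$ lies on it, by Lemma~\ref{lemma:negative-pedal-iconic}). Let $P$ and $Q$ be the two points where the line $ON$ meets $\mathcal{E}$. Since $P$ is on the major axis, $OP$ points along that axis, so the perpendicular to $OP$ at $P$ --- tangent to $\Gamma$ by the negative-pedal description --- is a tangent of $\Gamma$ perpendicular to the major axis and passing through $P$. By the axial symmetry of an ellipse, a tangent perpendicular to the major axis can touch the ellipse only at a point of that axis; being also on the tangent line in question, that point is $P$. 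Thus $P$ is an axis vertex of $\Gamma$, and likewise $Q$. Therefore the major axis of $\Gamma$ is the segment $PQ$, which is a diameter of $\mathcal{E}$; the centre of $\Gamma$ is the midpoint of $PQ$, namely the centre $N$ of $\mathcal{E}$; and at $P$ (resp. $Q$) both $\Gamma$ and $\mathcal{E}$ have the line through $P$ perpendicular to $NP$ as tangent, so the two curves are tangent at the vertices of the i-conic.

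The step I expect to be the main obstacle is the identification, in the last paragraph, of the axis vertices of $\Gamma$ with $P$ and $Q$. It is tempting but false in general to say that the line ``perpendicular to $OM$ at $M$'' touches $\Gamma$ at $M$: it does so only when $M$ lies on the axis (or when $\triangle ABC$ is equilateral), which is precisely the case used above, so the symmetry argument must be phrased carefully. If a synthetic argument feels too slippery, the safe fallback is a one-line computation: normalising $O$ to the origin and the centre of $\mathcal{E}$ to $(n,0)$ with radius $\rho$, the envelope $\Gamma$ has equation $\rho^{2}\big[(x-2n)^{2}+y^{2}\big]=(n^{2}+\rho^{2}-nx)^{2}$, and one checks at once that $P=(n-\rho,0)$ and $Q=(n+\rho,0)$ satisfy it.
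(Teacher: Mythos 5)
Your proposal is correct and follows essentially the same route as the paper: the paper likewise deduces the tangency of the side-lines from the fact that the feet of the perpendiculars from the pedal point lie on $\mathcal{E}$ (this is exactly the content of Lemma~\ref{lemma:i-conica-pedal-negativa}), identifies the conic via uniqueness of the in-conic with prescribed focus, and reads the vertex/centre/axis statements off the classical description of the negative pedal of a circle (Lemma~\ref{lemma:negative-pedal-iconic}, cited from [2]). Your only additions are to prove, rather than cite, that the axis vertices of $\Gamma$ are the intersections of $\mathcal{E}$ with the line $ON$ (your symmetry argument and the verification of the envelope equation are both correct), and to specialize to $D=O$, which is harmless since for a general pedal point the feet lie on $\mathcal{E}_D$ by definition.
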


\begin{figure}
\centering
\includegraphics[trim=300 20 300 60,clip,width=1.0\textwidth]{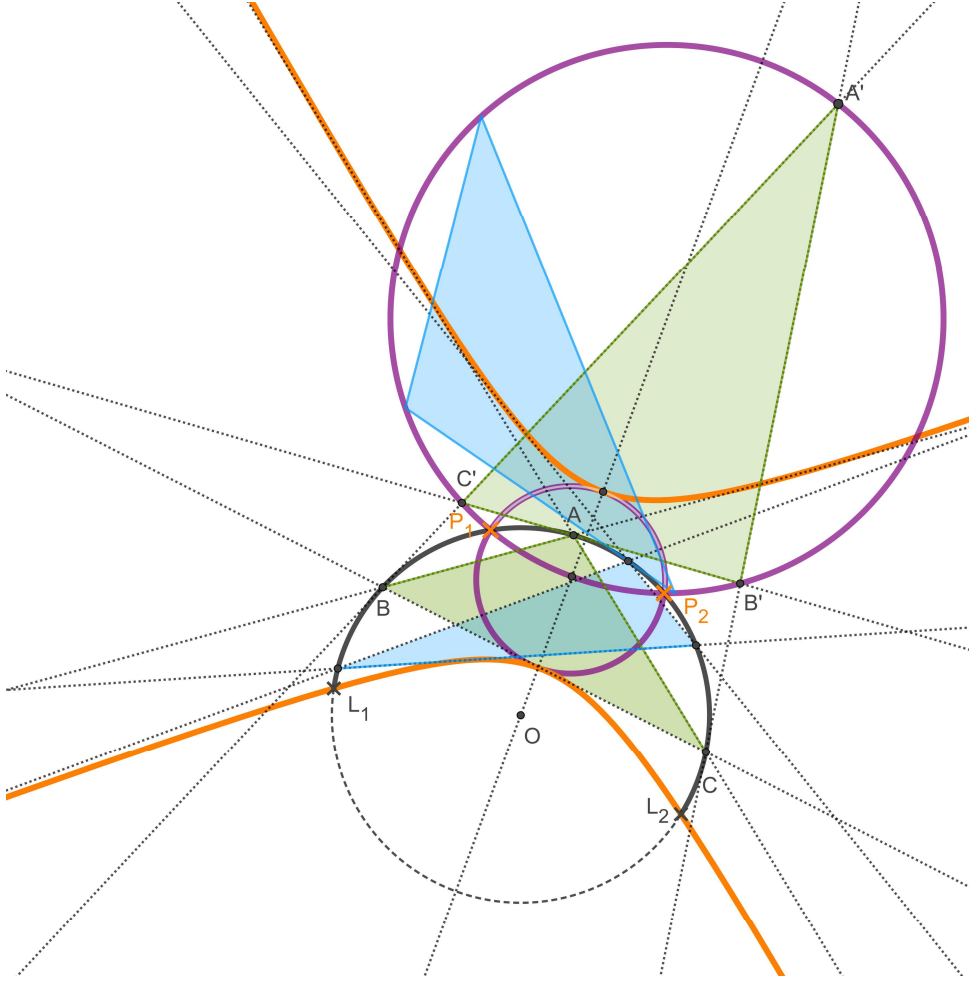}
\caption{The polars of the vertices of 
$\triangle{ABC}$ poristic w.r.$(\mathcal{C},\Gamma)$
determines  $\triangle {A'B'C'},$ who is poristic w.r. to 
$(\mathcal{E}',\mathcal{C}),$ where $\mathcal{E}'$
(bordeaux, doted)
is the inverses of Euler circle.
Thus, circles $(\mathcal{E}',\mathcal{C}),$ being the circumcircle and the excircle of $\triangle{A'B'C'}$ form a poristic pair and $\triangle{ABC}$ is intouch w.r. to this pair.
The family of intouch contains all triangles that
shares the circumcircle $\mathcal{C}$ and the Euler circle $\mathcal{E}.$
}
\label{fig:1010_Euler_pedal_Passo2}
\end{figure}

\begin{lemma} Let  $\triangle{ABC}$ and $O$ its circumcenter.
    The i-conic $\Gamma$ is the negative pedal of Euler circle $\mathcal{E},$
    w.r. to pedal point $O.$
     \label{lemma:i-conica-pedal-negativa}
\end{lemma}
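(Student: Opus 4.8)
The plan is to read the tangent lines of the negative pedal straight off Definition~\ref{def:pedal_negativa}, to recognise the three sidelines of $\triangle ABC$ among them, and then to upgrade ``a conic inscribed in $\triangle ABC$ with focus $O$'' to ``the i-conic $\Gamma$''. In fact the statement is the case $D=O$ of Corollary~\ref{lemma:euler-negative-pedal}, once one observes that the pedal circle of the circumcenter is the Euler circle: the foot of the perpendicular dropped from $O$ onto a sideline is the midpoint of that side (the perpendicular from the circumcenter to a chord is its perpendicular bisector), so the pedal triangle of $O$ is the medial triangle $\triangle A_mB_mC_m$ and its circumcircle is $\mathcal{E}$. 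I will nonetheless spell out a self-contained argument, resting on exactly this observation.

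By Definition~\ref{def:pedal_negativa}, $\mathcal{N}(\mathcal{E})$ with pedal point $O$ is the envelope of the lines $\ell_M$, where $\ell_M$ is the perpendicular to $OM$ at $M$ and $M$ ranges over $\mathcal{E}$; by Lemma~\ref{lemma:negative-pedal-iconic} this envelope is a conic with a focus at $O$ whose axis is the diameter of $\mathcal{E}$ through $O$ — an ellipse when $O$ is inside $\mathcal{E}$ and a hyperbola when $O$ is outside, which by Theorem~\ref{thm:necessary-Euler-chapler} is precisely the acute, resp.\ obtuse, case. The one genuinely geometric step is that each sideline is one of the $\ell_M$: the Euler circle passes through the midpoint $A_m$ of $BC$, and since $O$ is the circumcenter, $OA_m\perp BC$; hence $BC$ is the perpendicular to $OA_m$ at $A_m$, i.e.\ $BC=\ell_{A_m}$, and likewise $CA=\ell_{B_m}$ and $AB=\ell_{C_m}$. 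Therefore all three sidelines are tangent to $\mathcal{N}(\mathcal{E})$, so $\mathcal{N}(\mathcal{E})$ is a conic inscribed in $\triangle ABC$ with a focus at $O$.

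It remains to identify $\mathcal{N}(\mathcal{E})$ with $\Gamma$, and this is the step I expect to carry the real weight: since the i-conic was defined by the two properties ``inscribed in $\triangle ABC$'' and ``focus $O$'', I must know that these properties determine a unique conic. I would invoke two classical facts: the two foci of any conic inscribed in a triangle are isogonal conjugates with respect to that triangle, and a conic is determined by its pair of foci together with a single tangent line (the reflections of a focus in the tangent lines lie on a circle centred at the other focus). Since the isogonal conjugate of the circumcenter $O$ is the orthocenter $H$, every inscribed conic with a focus at $O$ has its second focus at $H$, and tangency to $BC$ then pins the conic down; hence $\mathcal{N}(\mathcal{E})=\Gamma$, as asserted. (The right-angled triangle gives the boundary position $O\in\mathcal{E}$ — the tangency case, in which $\mathcal{E}'$ degenerates to the tangent line to $\mathcal{C}$ at the right-angle vertex as discussed above — and the statement is to be read in that limiting sense.)
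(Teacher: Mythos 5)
Your argument is correct and follows essentially the same route as the paper: both rest on the single observation that $OA_m\perp BC$ with $A_m\in\mathcal{E}$ (and likewise for the other two sides), so each sideline is one of the defining perpendiculars of $\mathcal{N}(\mathcal{E})$ and hence tangent to it. The only difference is that you make explicit the uniqueness of the inscribed conic with focus $O$ (via isogonal conjugacy of the foci and determination by foci plus one tangent), a step the paper leaves implicit when it concludes that the negative pedal ``is precisely'' the i-conic.
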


\begin{proof} Refer to figure
~\ref{fig:1000_Euler_pedal_Passo0}.

Let $C_m$ be he midpoint of $AB;$ then $OC_m\perp AB;$
since $C_m$ is on
 $\mathcal{E},$
 then, by the definition of a negative pedal curve, the line $AB$  tangents $\Gamma;$ the same for the other two sides. Therefore, the negative pedal curve of Euler circle is precisely the i-conic of $\triangle{ABC}.$
 \end{proof}

\begin{corollary}
The triangles that share the same circumcircle and Euler circle, are precisely those poristically inscribed into $\mathcal{C}$
and circumscribed to ${\Gamma}$.

\end{corollary}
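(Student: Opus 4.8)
The plan is to deduce the statement directly from Lemma~\ref{lemma:i-conica-pedal-negativa} together with Poncelet porism and the fact that the negative pedal transform, taken with respect to a \emph{fixed} pedal point, is injective on the set of circles. First I would observe that $\triangle ABC$ is itself inscribed in $\mathcal C$ and circumscribed to $\Gamma$ (by the very definition of the i-conic), so $(\mathcal C,\Gamma)$ is a Poncelet pair for $n=3$; hence there is a genuine one-parameter family of triangles inscribed in $\mathcal C$ and circumscribed to $\Gamma$, and the phrase ``poristically inscribed in $\mathcal C$ and circumscribed to $\Gamma$'' is meaningful. It then remains to identify this family with the family of triangles having circumcircle $\mathcal C$ and Euler circle $\mathcal E$.

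For the inclusion of the latter in the former, let $\triangle A_1B_1C_1$ have circumcircle $\mathcal C$ and Euler circle $\mathcal E$. Then its circumcenter is the center $O$ of $\mathcal C$, and by Lemma~\ref{lemma:i-conica-pedal-negativa} its i-conic is the negative pedal $\mathcal N(\mathcal E)$ of $\mathcal E$ with respect to $O$, which is exactly $\Gamma$. In particular the three sidelines of $\triangle A_1B_1C_1$ are tangent to $\Gamma$ while $A_1,B_1,C_1\in\mathcal C$, so $\triangle A_1B_1C_1$ belongs to the poristic family of $(\mathcal C,\Gamma)$.

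For the reverse inclusion, let $\triangle A_1B_1C_1$ be inscribed in $\mathcal C$ and circumscribed to $\Gamma$. Its circumcircle is $\mathcal C$, hence its circumcenter is again $O$; since $\Gamma$ is a conic tangent to the three sides of $\triangle A_1B_1C_1$ and has a focus at $O$, it is the i-conic of this triangle. Applying Lemma~\ref{lemma:i-conica-pedal-negativa} now to $\triangle A_1B_1C_1$, its i-conic $\Gamma$ equals the negative pedal, with respect to $O$, of its Euler circle $\mathcal E_1$; thus $\mathcal N(\mathcal E_1)=\Gamma=\mathcal N(\mathcal E)$, and one is left to conclude $\mathcal E_1=\mathcal E$.

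The main obstacle is precisely this last injectivity step: I need that a conic with a prescribed focus $O$ is the negative pedal, with respect to $O$, of at most one circle. The clean way is to invert the negative pedal construction of Definition~\ref{def:pedal_negativa}: the pedal curve of a central conic with respect to a focus is the circle having the conic's axis as a diameter, so the pedal and negative pedal operations taken with the same pedal point are mutually inverse; hence both $\mathcal E_1$ and $\mathcal E$ are recovered as the pedal of $\Gamma$ with respect to $O$, and by Lemma~\ref{lemma:negative-pedal-iconic} both have the common axis of $\Gamma$ as a diameter, forcing $\mathcal E_1=\mathcal E$. I would also check that nothing degenerates in the obtuse case, where $O$ lies outside $\mathcal E$ and $\Gamma$ is a hyperbola: Poncelet porism is projective and thus insensitive to this, and Lemma~\ref{lemma:negative-pedal-iconic} already covers the hyperbola, so the argument runs verbatim; the right-angle (tangency) case is genuinely exceptional, $\Gamma$ degenerating to a point, and is handled separately as in Corollary~\ref{corollary:poristic-euler-pair}.
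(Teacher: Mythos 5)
Your proof is correct, but it does not follow the route the paper takes. The paper states this corollary without proof, as an immediate consequence of Lemma~\ref{lemma:i-conica-pedal-negativa}; the arguments it actually writes out for the equivalent statements are (a) a polar-duality proof of the theorem that follows, which transforms the pair $(\mathcal{C},\Gamma)$ into the circle pair $(\mathcal{E}',\mathcal{C})$ and invokes Poncelet for two circles, and (b) in the general Euler-pedal proposition, a direct pedal-foot argument: since the sides of a poristic triangle are tangent to $\Gamma=\mathcal{N}(\mathcal{E})$, the feet of the perpendiculars from $O$ to those sides lie on $\mathcal{E}$; but for a triangle inscribed in $\mathcal{C}$ those feet are exactly the side midpoints, and three points determine a circle, so its Euler circle is $\mathcal{E}$. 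You instead identify the Euler circle by running the lemma in both triangles and cancelling the negative pedal transform, i.e.\ by the injectivity $\mathcal{N}(\mathcal{E}_1)=\mathcal{N}(\mathcal{E})\Rightarrow\mathcal{E}_1=\mathcal{E}$, justified by the fact that the pedal of a conic at a focus is the circle on its axis as diameter. Both steps are sound; what your route costs is an extra appeal to the uniqueness of the in-conic with a prescribed focus (which the paper also uses, but only in the opposite direction), and what it buys is a cleanly reversible, ``functorial'' argument that makes the two directions symmetric. Your separate handling of the right-angle case, where $O\in\mathcal{E}$ and $\Gamma$ degenerates to a point, is a legitimate caveat the corollary itself glosses over, and your remark on unreachable arcs in the obtuse case matches the paper's own discussion.
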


Now we shall formulate the result in terms of a pair of circles and provide a poristic proof of our main result.

\begin{theorem} (see Pamfilos [7], Theorem 5)
The triangles that shares the same Euler circle $\mathcal{E}$
    and circumcircle $\mathcal{C}$ are  those whose vertices are tangency points (with circle $\mathcal{C}$)  of  the sides of triangles inscribed into $\mathcal{E'}$
and circumscribed to  $\mathcal{C}$. 
\end{theorem}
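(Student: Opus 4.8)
The plan is to derive this circle-theoretic restatement from what has just been assembled, by transporting the poristic pair $(\mathcal{C},\Gamma)$ onto the poristic pair $(\mathcal{E}',\mathcal{C})$ under polar reciprocity with respect to the circumcircle $\mathcal{C}$. The starting point is Lemma~\ref{lemma:i-conica-pedal-negativa}: the i-conic $\Gamma$ is the negative pedal of the Euler circle $\mathcal{E}$ with respect to $O$, equivalently $\mathcal{E}$ is the pedal curve of $\Gamma$ with respect to $O$. I would then invoke the classical identity that polar reciprocation in a circle centred at $O$ is the composite of "take the pedal with respect to $O$" followed by "invert in that circle": a tangent line of a curve reciprocates to its pole, which is the inverse of the foot of the perpendicular dropped from $O$ onto that line. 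Applying this to $\Gamma$ turns its tangent lines into points and shows that the polar dual $\Gamma^{\circ}$ of the i-conic with respect to $\mathcal{C}$ is the inverse of $\mathcal{E}$ in $\mathcal{C}$; that is, $\Gamma^{\circ}=\mathcal{E}'$. For a reader who prefers to stay inside the paper, the same identity for a single triangle is exactly Corollary~\ref{cor:corolario1} together with Lemma~\ref{lemma:polar-polo-simetrico}, since the poles of the sides of $\triangle{ABC}$ are the vertices $A',B',C'$; the pedal--inversion route has the advantage of being uniform over the whole Poncelet family and of not presupposing the invariance of $\mathcal{E}$.

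With $\Gamma^{\circ}=\mathcal{E}'$ established, let $\triangle{ABC}$ be any triangle in the Poncelet family of $(\mathcal{C},\Gamma)$, i.e. inscribed in $\mathcal{C}$ and circumscribed about $\Gamma$. Reciprocating in $\mathcal{C}$ sends each vertex (a point of $\mathcal{C}$) to the tangent of $\mathcal{C}$ at that vertex, and each side (a tangent of $\Gamma$) to its pole, a point of $\Gamma^{\circ}=\mathcal{E}'$. Hence the triangle $\triangle{A'B'C'}$ bounded by the tangents to $\mathcal{C}$ at $A,B,C$ has its vertices on $\mathcal{E}'$ and its sides tangent to $\mathcal{C}$, touching $\mathcal{C}$ precisely at $A,B,C$; thus $\triangle{ABC}$ is the in-touch triangle of $\triangle{A'B'C'}$ with respect to $(\mathcal{E}',\mathcal{C})$ in the sense of Definition~\ref{definition:poristic-circles}. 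Because polar reciprocity is an involution on non-degenerate conics, and $O$, being a focus of $\Gamma$, lies on neither $\Gamma$ nor $\Gamma^{\circ}$, so both remain genuine conics, this correspondence is bijective and carries the Poncelet family of $(\mathcal{C},\Gamma)$ onto the family of triangles inscribed in $\mathcal{E}'$ and circumscribed about $\mathcal{C}$, matching each such triangle with its in-touch triangle.

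Assembling the equivalences finishes the proof: a triangle has circumcircle $\mathcal{C}$ and Euler circle $\mathcal{E}$ if and only if it lies in the Poncelet family of $(\mathcal{C},\Gamma)$ (the corollary following Lemma~\ref{lemma:i-conica-pedal-negativa}), which by the previous paragraph holds if and only if it is the in-touch triangle, relative to $(\mathcal{E}',\mathcal{C})$, of some triangle inscribed in $\mathcal{E}'$ and circumscribed about $\mathcal{C}$ --- and this is precisely the assertion of the theorem. Alternatively, one may simply remark that this is Theorem~\ref{proposition:simetric-point-pole} reproved by a route that also exhibits the i-conic as the common "middle" object attached to the Euler circle.

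I expect the main obstacle to lie in making the identification $\Gamma^{\circ}=\mathcal{E}'$ and the transfer of Poncelet families airtight in the non-acute configurations. When $\triangle{ABC}$ is obtuse, $O$ is outside $\mathcal{E}$ and $\Gamma$ is a hyperbola (Lemma~\ref{lemma:negative-pedal-iconic}), so one must check branch by branch and arc by arc that polar reciprocity really does match the two Poncelet families --- in particular that the "unreachable" arcs of $\mathcal{E}'$ described earlier in this section correspond under reciprocity to the two-branch geometry of $\Gamma$ --- and the right-angled limiting case, where $\mathcal{E}$ is internally tangent to $\mathcal{C}$, $\mathcal{E}'$ degenerates to the tangent line at the common point, and $\Gamma$ degenerates to a parabola, has to be treated separately, exactly as in the preceding subsection.
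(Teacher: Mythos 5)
Your proposal is correct and follows essentially the same route as the paper: reciprocate in $\mathcal{C}$, observe that the i-conic $\Gamma$ (focused at the centre of reciprocation) dualizes to $\mathcal{E}'$, and thereby carry the Poncelet family of $(\mathcal{C},\Gamma)$ onto that of $(\mathcal{E}',\mathcal{C})$ with each triangle matched to its in-touch triangle. Your justification of $\Gamma^{\circ}=\mathcal{E}'$ via the pedal-plus-inversion factorization of polar reciprocity is a slightly more uniform argument than the paper's appeal to Corollary~\ref{cor:corolario1}, and your closing remarks on the obtuse and tangent configurations supply care that the paper defers, but the underlying proof is the same.
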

\begin{proof}  Refer to figure 
\ref{fig:1010_Euler_pedal_Passo2}

 Perform a dual transform w.r. to  $\mathcal{C}.$
 The vertices $A,B,C$ converts into their polars, which are the tangents $a',b',c'$
 that determines $\triangle{A'B'C'}$. By this polarity,
  the circumcircle  $\mathcal{C}$ does not change,  while the i-conic $\Gamma,$ whose focus is  $O,$ the inversion center, converts into the circumcircle of the polar triangle
 $\triangle{A'B'C'}.$ 
   By 
  ~\ref{cor:corolario1} this is 
the inverse  of Euler circle, $\mathcal{E'}.$ 
    Therefore,   $\triangle{ABC}$ is inscribed in $\mathcal{C}$ 
 and circumscribed to the i-conic $\Gamma$ if and only if
  $\triangle{A'B'C'}$ is  inscribed into
  the symmetric of Euler circle w.r. to the circumcircle
  and circumscribed to 
  $\mathcal{C}.$ 
  
  We thus  poristically generate two families of triangles, one circumscribed, other in-touch, the later being  precisely the family of all triangles sharing the same Euler circle and circumcircle. 
  \end{proof}
  
 The obtuse-angle case does not need a special treat as   very few  change. The i-conic will be a hyperbola and the poristic triangles are again the in-touch triangles with respect to the dual system.
 Some arc of the circumcircle $\mathcal{C}$ 
 is not reachable. Nevertheless, this does not alter  the result, nor  the construction. The description of the unreachable arcs is similar to those  we provide in the previous section, so we omit it.

 Despite  circumcircle and  Euler circle are not  poristic pair, 
 a providential  $i-conic$ and its negative pedal curve put the things into the right perspective and enable to give
 a poristic solution to a non-poristic problem.

\bigskip
\bigskip
\bigskip

DEPARTAMENTO DE MATEMÁTICA

UNIVERSIDADE FEDERAL DE PERNAMBUCO

RECIFE, (PE) BRASIL

\textit{E-mail address}:

\texttt{liliana@dmat.ufpe.br}
\bigskip
\bigskip
\end{document}